\newtheorem{theorem}{Theorem}[section]
\newtheorem{lemma}[theorem]{Lemma}
\newtheorem{proposition}[theorem]{Proposition}
\theoremstyle{definition}
\newtheorem{definition}[theorem]{Definition}
\numberwithin{equation}{section}
\newcommand{\remove}[1]{}
\theoremstyle{remark}
\newcommand{\ra} {\rightarrow}
\newcommand{\RR} {\mathbb{R}}
\newcommand{\N}{{\mathbb N}}
\newcommand{\DD} {\displaystyle}
\newcommand{\la} {\lambda}
\newcommand{\Om}{\Omega}
\theoremstyle{remark}
\newcommand{\al}{\alpha}
\newcommand{\ba}{\beta}
\newcommand{\noi}{\noindent}
\newcommand {\n}{\nonumber\\}
\newcommand{\ol}{\overline}
\newcommand{\e}{\epsilon}
\newcommand{\p}{p_s^*}
\newcommand{\fpl}{(-\Delta)_p^s\,}
\begin{document}

%\presentaddress{This is sample for present address text this is sample for present address text}

\title{Regularity results for Choquard equations involving fractional $p$-Laplacian}
\author[Reshmi Biswas and Sweta Tiwari]
{Reshmi Biswas and Sweta Tiwari}
\address{Reshmi Biswas \newline
	Department of Mathematics, IIT Guwahati, Assam 781039, India}
\email{b.reshmi@iitg.ac.in}

\address{Sweta Tiwari \newline
	Department of Mathematics, IIT Guwahati, Assam 781039, India}
\email{swetatiwari@iitg.ac.in}
\subjclass[2010]{35J60, 35R11, 35B33, 35B65, 35B45, 35A15}

\keywords{Choquard equation, Critical exponents, Fractional  $p$-Laplacian,  $a~ priori$ bound, Sobolev vs. H\"older minimizers}

\begin{abstract}{	\noindent	In this article, first we address the regularity of weak solution for a class of $p$-fractional Choquard equations:	
	\begin{equation*}
	\;\;\;	\left.\begin{array}{rl}	
	(-\Delta)_p^su&=\left(\displaystyle\int_\Omega\frac{F(y,u)}{|x-y|^{\mu}}dy\right)f(x,u),\hspace{5mm}x\in \Omega,\\\\
	u&=0,\hspace{35mm}x\in \mathbb R^N\setminus \Omega,
	\end{array}
	\right\}
	\end{equation*}
	where $\Omega\subset\mathbb R^N$ is a smooth  bounded   domain, $1<p<\infty$ and $0<s<1$ such that $sp<N,$ $0<\mu<\min\{N,2sp\}$ and $f:\Omega\times\mathbb R\to\mathbb R$ 
	is a continuous function with at most critical growth condition (in the sense of Hardy-Littlewood-Sobolev inequality) and $F$ is its primitive.
	Next, for  $p\geq2,$ we discuss the
	Sobolev versus H\"{o}lder  minimizers of the energy functional $J$ associated to the above problem, and using that we establish the 
	existence of the local minimizer of $J$ in the fractional Sobolev space $W_0^{s,p}(\Omega).$
	Moreover, we discuss the aforementioned results by adding 
		a local perturbation term (at most critical in the sense of Sobolev inequality) 
		in the right-hand side in the  above equation.}
\end{abstract}

%\jnlcitation{\cname{%
%\author{Masato G}, \author{Woollings T}, \author{Williams KD}, \author{Hoskins BJ}, \author{Lee RW},}
%\ctitle{A regime analysis of Atlantic winter jet variability applied to evaluate HadGEM3-GC2}.
%\cjournal{Quarterly Journal of the Royal Meteorological Society}. \cyear{2016};\cvol{142(701):3162--3170}. \url{https://doi.org.10.1002/qj.2897}}

\maketitle

%\footnotetext{\textbf{Abbreviations:} ANA, anti-nuclear antibodies; APC, antigen-presenting cells; IRF, interferon regulatory factor}

\section{Introduction}
Our first aim in this article is to study the following  doubly nonlocal $p$-fractional  Choquard equation :
\begin{equation}\label{mainprob}
\;\;\;	\left.\begin{array}{rl}	
(-\Delta)_p^su&=\left(\DD\int_\Om\frac{F(y,u)}{|x-y|^{\mu}}dy\right)f(x,u),\hspace{5mm}x\in \Om,\\\\
u&=0,\hspace{40mm}x\in \RR^N\setminus \Om,
\end{array}
\right\}
\end{equation}
where $\Om\subset\RR^N$ is a  bounded   domain with $C^{1,1}$ boundary, $1<p<\infty$ and $0<s<1$ such that $sp<N,\,$ $0<\mu<\min\{N,2sp\}$ and $f:\Om\times\RR\to\RR$ 
is a continuous function with at most critical growth condition (in the sense of Hardy-Littlewood-Sobolev inequality), described later.
Here $F(x,t)=\int_{0}^{t}f(x,\tau)d\tau$ is the primitive of $f$.	
The nonlocal operator  $(-\Delta)_{p}^{s}$ is defined  as
\begin{equation}\label{operator}
(-\Delta)_{p}^{s} u(x):=  2 \DD\lim_{\e\to 0^+}
\int_{\RR^N\setminus {B_\e(x)}}\frac{\mid
	u(x)-u(y)\mid^{p-2}(u(x)-u(y))}{\mid
	x-y\mid^{N+sp}}dy, ~~x \in \RR^N
\end{equation} up to a normalized constant.\\\\
In recent years,   significant attention has been given in the study of the problems   involving the nonlocal operators, 
due to its various applications in the real world such as thin obstacle
problems, finance, conservation laws, phase transition, crystal dislocation, anomalous diffusion, material
science, etc. (see for e.g., \cite{ba,caff2,lev,silves} and the references therein for more details).
One can refer  to the  monograph \cite{bisci} for the study of nonlocal problems driven by  the fractional Laplacian and 
\cite{hit,sarika} for the detailed discussions on  the fractional $p$-Laplacian and problems involving it.\\ 
\par On the other hand, the study of Choquard type equations was started 
with the celebrated work of Pekar \cite{pekar}, where the author considered 
the following nonlinear Schr\"{o}dinger-Newton equation:
{\begin{align}\label{sn}
	-\Delta u + V(x)u = ({\mathcal{K}}_\mu * u^2)u +\la f(x, u),
	\end{align}}
\noi	where $\mathcal{K}_\mu$ denotes  the  Riesz potential.
The nonlinearity in the right-hand side of \eqref{sn} is termed as Hartree-type nonlinearty. This type of nonlinearity plays a key role in the study the Bose-Einstein
condensation (see \cite{bose}) and also describes the self gravitational
collapse of a quantum mechanical wave function (see \cite{penrose}). 
For $V(x)=1,\la=0$, the equations of type \eqref{sn} were extensively  studied in  \cite {lieb,lions}. 
For more results on the existence of solutions of Choquard equations, without attempting to provide a complete list, 
we refer to \cite{moroz,moroz-main,moroz4,moroz5} and the references therein. In the fractional Laplacian set up, Wu \cite{wu} discussed existence
and stability of solutions for the equations
{\begin{align}\label{sn1}
	(-\Delta)^s u + \omega u = ({\mathcal{K}}_\mu *|u|^q)|u|^{q-2}u + \la f(x,u) ~~\text{in} ~\RR^N ,\end{align}}
where $q = 2, ~\la = 0$ and $\mu\in (N -2s, N).$  
For the
critical case, i.e., $q = 2_{\mu,s}^*:=(2N -{2{\mu}/2})/(N - 2s),$ Mukherjee and Sreenadh \cite{tuhina} studied existence and multiplicity, and regularity results for the
solutions of \eqref{sn1}  in a smooth bounded domain for $w=0$ and $f(x, u) = u.$  Pucci et al. \cite{pucci}
studied some Schr\"odinger-Choquard-Kirchhoff equation driven by the fractional $p$-Laplacian with critical Hardy-Littlewood-Sobolev exponent.
For more details regarding Choquard type equations, we refer to the survey  paper \cite{ts} and references therein.\\

\par The regularity of weak solutions has been one of the most interesting topics since years and the
literature available on the regularity of weak solutions for both local and non-local problems is quite vast. For the regularity results of the local elliptic problems,
we refer to \cite{dib, liberman, tol}.
A systematic study on the regularity results of the non-local elliptic problems involving fractional Laplacian started with the pioneering work of Caffarelli and Silvestre in \cite{caffe1}.
Consider the following non-local  problem: 
\begin{equation}\label{nl}
(-\Delta)_p^s u =g\text{ in } \Om, \quad
u=h \text{ in } \RR^N\setminus \Om. 
\end{equation} When $p=2,$ in \cite{caffe1}, Caffarelli and Silvestre established the interior $C^{1+\al},$ $\al>0,$ regularity for viscosity solutions to  \eqref{nl}.
The authors also proved  interior $C^{2s+\al}$ regularity for the convex equation (see \cite{caffe2}). 
For the regularity of weak solutions to  free boundary problem involving the fractional Laplacian $(p=2),$ we refer to \cite{silves}.  
Concerning the boundary regularity for the solution of \eqref{nl}, for $p=2,\,h=0$ and  $g \in L^\infty(\Om),$
we refer to the work of Ros-Oton and Serra in \cite{RS}. Here the authors used a 
barrier function and the interior regularity results for the fractional Laplacian to
show that any weak solution $u$ of \eqref{nl} belongs to $C^s(\mathbb R^N)$ and
$\frac{u}{d^s}{|_\Om}\in C^{\alpha},$ up to the boundary $\partial\Om,$ for some $\alpha\in(0,1)$.
In \cite{RS1}, the authors discussed  the  high integrability  of these weak solution by using the 
regularity of Riesz potential established in  \cite{stein}. 
The regularity results for the non-local quasi-linear problem is explored by Squassina et al. in \cite{squa}, where the authors
studied the global H\"older regularity for the weak solutions to \eqref{nl},
for $p\in(1,\infty),\, h=0,$ and $g\in L^{\infty}(\Om)$.
Also, regarding the fine boundary regularity results for the problems of type \eqref{nl}, for the degenerate case  $(p\geq2)$ and $h=0$, we cite \cite{fine-bd}. 
Here the authors exhibited a weighted H\"{o}lder regularity up to the boundary, that is,
$\frac{u}{d^s}{|_\Om}\in C^{\alpha,}$ up to the boundary $\partial\Om,$ for some $\alpha\in(0,1)$.
We would like to mention that
the fine boundary regularity for the singular case $(1<p<2)$ is still an open problem.\\
\par		Concerning the regularity of the Choquard equations, we refer to \cite{yangjmaa}, 
in which Gao and Yang studied the Dirichlet problem involving local Laplacian 
and the critical  Choquard type
nonlinearity (in the sense of Hardy-Littlewood-Sobolev inequality).
Moroz and Schaftingen \cite{moroz-main} established the
$W^{2,q}_{\text{loc}}(\mathbb R^N)$-regularity $(q>1)$ of the weak solutions to the following 
Choquard problem involving local Laplacian:
\begin{align}\label{mrz}
-\Delta u +   u = (\mathcal K_\mu * F(u))f(u)\;\; \text{in\;\;} \RR^N ,
\end{align} where
$$|tf(t)|<C(|t|^{\frac{N+\mu}{N}}+|t|^{\frac{N+\mu}{N-2}}), \text{\;\;\; for some constant\;\;} C>0.$$
\par Although an extensive research is done on the existence of solutions for the doubly non-local problems, there are very few results
present in the literature regarding the regularity of weak solutions to such problems.
By generalizing the idea of \cite{moroz4}, in \cite{avenia}, for the fractional Laplacian framework, the authors established
the regularity results  for solutions of the following Choquard equation  :
$$ (-\Delta)^s u +  \omega u = (\mathcal K_\mu * |u|^r)|u|^{r-2}u,\;\; u \in H^s(\RR^N ),$$ where  $\omega > 0,$ 
$ N\geq 3,$ $\mu \in (0, N),$ $s \in(0, 1),$ and $\tilde 2_{s,\mu}^*<r<2_{s,\mu}^*.$
In \cite{su}, the authors studied the $L^\infty(\mathbb R^N)$ bound of the non-negative ground state solution to some Kirchhoff-Choquard equation driven by the
fractional Laplacian  with critical Choquard term (in the sense of Hardy-Littlewood-Sobolev inequality). Very recently, Giacomoni et al. \cite{divya} studied the regularity result
for the following generalized doubly non-local problem  in a smooth bounded domain $\Om$ in $\RR^N$: 
\begin{equation}\label{dg}
\left.\begin{array}{rllll}
(-\Delta)^s u
=g(x,u)+ \left(\DD \int_{\Om}\frac{F(u)(y)}{|x-y|^{\mu}}dy\right) f(u)(x)  \; \text{in}\;
\Om,\quad u=0\; \text{ in } \RR^N \setminus \Om,
\end{array}
\right\}
\end{equation}
where $f:\mathbb R \rightarrow \mathbb R$  is a continuous  function such that there exists a constant $C>0,$
\begin{align*}
|tf(t)| \leq C(|t|^{\frac{2N-\mu}{N}}+ |t|^{\frac{2N-\mu}{N-2s}})
\end{align*} and $ g: \overline{\Om} \times \mathbb{R} \ra \mathbb{R}$ is  a Carath\'eodory function  satisfying Sobolev type critical (or singular) growth assumption.
\par We mention that the techniques used in \cite{divya, moroz-main}
cannot be  implemented straightforward to \eqref{mainprob} due to lack of Hilbert nature of the solution space associated to the problem.
The regularity result  for the quasilinear Choquard equations involving the 
local (or fractional) $p$-Laplacian are very few.  For instance, consider the following equation studied in \cite{miy}: 
\begin{align}\label{miy}
(-\Delta)_p^s u +  \omega u = \left(\frac{1}{|u|^{\mu}} * F(u)\right)f(u)\;\;  \text{in\;\;} \RR^N, 
\end{align} where $\omega>0$ is a real number and $f$ has sub-critical growth in terms of  Hardy-Littlewood-Sobolev inequality.
For the case $s=1,$ we cite \cite{alves}, in which the authors studied \eqref{miy} in the local $p$-Laplacian set up. In both the aforementioned works,  the authors 
proved local H\"older regularity of the weak solutions of \eqref{miy} with some restrictive conditions, viz., $\mu<sp$ and $\mu<p,$ respectively.\\ 
% Moreover, regularity of the solutions  to the following critical Choquard equation with local $p$-Laplacian involving   Hardy-Littlewood-Sobolev critical exponent was not attempted till now.\\
%\begin{equation*}
%\left\{\begin{array}{rllll}
%(-\Delta)_p u
%&= \left(\DD \int_{\Om}\frac{|u(y)|^{p_{\mu}^*}}{|x-y|^{\mu}}dy\right) |u(x)|^{p_{\mu}^*-2}u(x)  \; \text{in}\;
%\Om,\\
%u&=0\; \text{ on } \partial \Om,
%\end{array}
%\right.
%\end{equation*}  was not attempted till now, where  $p_{\mu}^*$  is the Hardy-Littlewood-Sobolev critical exponent in the local set up.\\\\
\par\noi Inspired by all these works,   by using a unified boot-strap technique
for $1<p<\infty,$ first we investigate $a~priori$ bound for the weak solutions to \eqref{mainprob} which covers
a large class of nonlinearities (up to the critical level in the sense of Hardy-Littlewood-Sobolev inequality). 
After achieving  $L^\infty(\Om)$ estimate on the weak solution to \eqref{mainprob}, we use the result 
by Squassina et al. \cite{squa} along with Hardy-Littlewood-Sobolev inequality, to infer the H\"{o}lder regularity result. 
To the best of our knowledge, the $L^\infty(\Om)$ bound on 
the weak solutions to the doubly non-local problem of type \eqref{mainprob} involving critical Choquard type nonlinearity is established for the first time in this present work.   
\par Next, we discuss the Sobolev versus H\"older minimizers for the  energy functional associated to \eqref{mainprob}. We show that  local
minimizers of the energy functional associated to \eqref{mainprob}
with respect to $C_d^0(\overline{\Om})$-topology  are also local minimizers of the same energy functional with respect to $W_0^{s,p}(\Om)$-topology.
In variational problems, this result plays  an important  role in establishing 
the  multiplicity of solutions.
In the local framework, Brezis and Nirenberg \cite{niren} were the first
to study this type of result, where the authors showed that the local minima of the associated energy functional in $C^1$ topology
and in $H^1$, topology coincides.   In the fractional framework $(p=2)$, the analogous result is proved in \cite{squassina}.
In \cite{SH}, this result is further generalized for the fractional $p$-Laplacian set up for $p\geq2$. For non-local nonlinearity, 
Gao and Yang \cite{yangjmaa}  studied such result for the following Brezis-Nirenberg type  Critical Choquard problem involving local
Laplacian under some appropriate assumptions on $f$: $$
-\Delta u=\la f(u)+ \left(\DD \int_{\Om}\frac{|u(y)|^{2_\mu^*}}{|x-y|^{\mu}}dy\right) |u(x)|^{2_\mu^*-2}u \text{\;\; in\;\;}\Om,\;\;\;
u=0\; \text{ in\;\; } \partial \Om,$$
where $\Om\subset \RR^N$, $N\geq3$ is a bounded domain having smooth boundary, $\la>0,$ $0 <\mu <N$ and $2_\mu^*=\frac {2N-\mu}{N-2}$ is 
the critical Choquard exponent in view of  Hardy-Littlewood-Sobolev inequality.
In the case of doubly non-local equation,  
Giacomoni et al. \cite{divya}  investigated
$H^s$ versus $C^0$- weighted minimizers of the functional associated to \eqref{dg}.
\par But, to the best of our knowledge, there is no such work regarding Sobolev versus H\"older minimizers for
the  problems involving the
fractional $p$-Laplacian and  critical (or sub-critical) Choquard type nonlinearity.
Also, the tools used in \cite{divya,yangjmaa} to prove this result  can not be adapted for the  general case of $1<p<\infty.$
Therefore,  we establish this result for the problem $(\mathscr P_3)$ considering the degenerate case $(p\geq2).$ 
\par Finally, we show that  if \eqref{mainprob} has a weak sub-solution
and a weak supersolution, then it attains a solution in between the sub-super solutions pair,  
which  also appears as a local minimizer of the associated energy functional to the problem $(\mathscr{P}_3)$ in $W_0^{s,p}(\Om)$
topology.
\par In addition, we also study the aforementioned results, established for \eqref{mainprob}, for the local perturbation of the
	nonlocal Choquard type
	nonlinearity, precisely for the nonlinearity
	$g(x,u)+\left(\DD\int_\Om\frac{F(y,u)}{|x-y|^{\mu}}dy\right)f(x,u),$ where the perturbation term $g(x,u)$ has
	at most critical growth in sense of Sobolev inequality.
	In the end, we apply these results to discuss the multiplicity result when $g(x,u)$ is of concave type.\\\\
\par\noi For doubly non-local equations of type \eqref{mainprob} the main difficulty arises due to the non-Hilbert nature of the solution space and the presence of 
the nonlinear operator $(-\Delta)_s^p,$ as well as, the non-local nonlinearity of Choquard type.
Hence, most of the results and techniques that were used  in  establishing the 
similar kind of regularity results  in the fractional Laplacian or in the local Laplacian set up
(for instance, see \cite{divya,moroz-main,yangjmaa}) are not applicable to \eqref{mainprob}.
Therefore, we need to carry out some extra delicate analysis in our proofs to overcome the stated difficulties.
In \cite{ts},  the regularity of solutions of critical Choquard equations involving the $p$-Laplacian is 
posed as an open problem and in this work, we come up with the answer to it.  In this regard, we would  
like to remark that the regularity results we establish for \eqref{mainprob}  is also valid 
in the  local $p$-Laplacian framework, which are also new to the literature. \\

The plan of the paper is described as follows. In Section \ref{space}, first we recall some  preliminary results regarding fractional Sobolev spaces and state the main results of this article. 
In Section \ref{pmr}, we give proofs of the main results of this article.

\section{Functional settings and statements of the main theorems}\label{space}
In this section, first we collect some known results regarding fractional Sobolev spaces. To study $p$-fractional Sobolev spaces in details we refer to \cite{hit,sarika}.
For $0<s<1$ and $1<p<\infty,$ the fractional Sobolev space is defined as
$$W^{s,p}(\RR^N):= \left\{u\in L^{p}(\RR^N)\bigg| \int_{\RR^N}\int_{\RR^N}\frac{|u(x)-u(y)|^p}{|x-y|^{N+sp}}dxdy<\infty\right\}$$
equipped with the norm
\begin{align*}
\|u\|_{W^{s,p}(\RR^N)}=\|u\|_{L^p(\RR^N)}+ \left(\int_{\RR^N}\int_{\RR^N}
\frac{|u(x)-u(y)|^{p}}{|x-y|^{N+sp}}dxdy \right)^{\frac 1p}.
\end{align*}
We also define
\[ W_0^{s,p}(\Om) = \{u\in W^{s,p}(\RR^N) : u = 0 \;\text{in}\; \RR^N\setminus \Om\}\]
with respect to the norm
\[\|u\|_{s,p}=\left( \int_{\RR^{N}}\int_{\RR^N}\frac{|u(x)- u(y)|^{p}}{|x-y|^{N+sp}}dx
dy\right)^{\frac 1p}= \left(\int_{Q}\frac{|u(x)- u(y)|^{p}}{|x-y|^{N+sp}}dx
dy\right)^{\frac 1p},\]
where  $Q=\RR^{2N}\setminus(\Om^c\times \Om^c).$ Then $W_0^{s,p}(\Om)$ is a reflexive Banach space. Also $W_0^{s,p}(\Om)\hookrightarrow L^q(\RR^N)$ continuously each $q \in [1,p_s^*]$ and $W_0^{s,p}(\Om) \hookrightarrow \hookrightarrow L^q(\Om)$ compactly for each $q \in [1,p_s^*),$ where   $p^*_s= \frac{Np}{N-sp}$ is the Sobolev-type critical exponent. 
The best constant $S_s$ is given below: 
\begin{align}\label{ch27}
S_s= \inf_{u \in W_0^{s,p}(\Om)\setminus \{ 0\}}   \frac{\int_{\RR^N}\int_{\RR^N} \frac{|u(x)-u(y)|^p}{|x-y|^{N+sp}}~dxdy }{\left( \int_{ \Om} |u|^{p^*_s}~dx\right)^{p/p^*_s}}. 
\end{align} The dual of the space $W_0^{s,p}(\Om)$ is denoted by $W^{-s,p'}(\Om)$ with the norm $\|\cdot\|_{-s,p'},$ where $p'=\frac{p}{p-1}$ is  conjugate to $p.$ also, by $\langle\cdot,\cdot\rangle,$ we denote the dual pairing between  $W_0^{s,p}(\Om)$ and $W^{-s,p'}(\Om)$.\\
Let the distance function $d: \overline{\Om} \ra \mathbb{R}_+ $ be defined  by \begin{equation}\label{dist}d(x):= \text{dist}(x,\RR^N\setminus \Om),\; x \in \overline{\Om}.\end{equation}
The weighted H\"older-type spaces are defined as follows:
\begin{align*}
& C^0_d(\overline{\Om}) := \bigg \{ u \in C^0(\overline{\Om}): u/d^s \text{ admits a continuous extension to } \overline{\Om}   \bigg\},\\
& C^{0,\al}_d(\overline{\Om}) := \bigg \{ u \in C^0(\overline{\Om}): u/d^s \text{ admits a } \al \text{ -H\"older continuous extension to } \overline{\Om}   \bigg\}
\end{align*}
equipped with the norms
\begin{align*}
&\|u\|_{C_d^0(\ol{\Om})}:= \|u/d^s\|_{L^\infty(\Om)}, \nonumber\\ &\|u\|_{C_d^{0,\al}(\ol{\Om})}:= \|u\|_{C_d^0(\ol{\Om})}+ \sup_{x, y \in \overline{\Om}, x\not = y } \frac{|u(x)/d^s(x)- u(y)/d^s(y)|}{|x-y|^{\al}},
\end{align*}
respectively.  The embedding $C^{0,\al}_d(\overline{\Om})\hookrightarrow\hookrightarrow C^0_d(\overline{\Om})$ is compact for all $\al\in(0,1).$ \\
\noindent The next lemma states the monotonicity property of the fractional $p$-Laplacian for $p\geq2$.
\begin{lemma}{\rm\cite[Lemma 2.3] {SH}}\label{mon}
	Let $p\ge 2$. There exists $C=C(p)>0$ such that for all $u,v\in W_0^{s,p}(\Om)\cap L^\infty(\Om)$ and all $q\geq 1$
	$$\Big\|(u-v)^{\frac{p+q-1}{p}}\Big\|_{s,p}^p \leq C\,q^{p-1}\,\langle(-\Delta)_s^p u-(-\Delta)_s^p v,(u-v)^q\rangle.$$
\end{lemma}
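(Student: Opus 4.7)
The plan is to reduce the lemma to a single pointwise algebraic inequality and then integrate against the nonlocal kernel. I interpret $(u-v)^q$ as $w_+^q$, where $w := u-v$ and $w_+ := \max(w,0)$; this lies in $W_0^{s,p}(\Omega) \cap L^\infty(\Omega)$ and is a legitimate test function. Writing $J_p(t) := |t|^{p-2}t$, the weak formulation together with the definition of $(-\Delta)_p^s$ gives
$$\langle (-\Delta)_p^s u - (-\Delta)_p^s v,\, w_+^q\rangle = \iint_{\mathbb{R}^{2N}} \frac{\bigl[J_p(U) - J_p(V)\bigr]\bigl[w_+^q(x) - w_+^q(y)\bigr]}{|x-y|^{N+sp}}\, dx\, dy,$$
where $U := u(x) - u(y)$ and $V := v(x) - v(y)$, while $\|w_+^r\|_{s,p}^p$ (with $r := (p+q-1)/p$) is the analogous double integral of $|w_+^r(x) - w_+^r(y)|^p$. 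It therefore suffices to prove the pointwise estimate
$$\bigl[J_p(U) - J_p(V)\bigr]\bigl[w_+^q(x) - w_+^q(y)\bigr] \geq \frac{c_p}{q^{p-1}}\,\bigl|w_+^r(x) - w_+^r(y)\bigr|^p$$
for all $x, y \in \mathbb{R}^N$.

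By the $(x,y)$-symmetry of both sides I may assume $w(x) \geq w(y)$. In the principal case $w(x) \geq w(y) \geq 0$, set $A := w(x)$ and $B := w(y)$, so that $U - V = A - B \geq 0$. Simon's monotonicity inequality for $p \geq 2$,
$$\bigl[J_p(U) - J_p(V)\bigr](U - V) \geq c_p\, |U - V|^p,$$
yields $J_p(U) - J_p(V) \geq c_p (A - B)^{p-1}$, hence
$$\bigl[J_p(U) - J_p(V)\bigr](A^q - B^q) \geq c_p\, (A - B)^{p-1}(A^q - B^q).$$
I then invoke the elementary one-variable inequality
$$(A - B)^{p-1}(A^q - B^q) \geq \frac{1}{q^{p-1}}\bigl(A^r - B^r\bigr)^p \qquad (A \geq B \geq 0),$$
which follows by writing $A^r - B^r = r \int_B^A t^{r-1}\, dt$, applying H\"older's inequality with exponents $p$ and $p/(p-1)$ (noting that $(r-1)p = q-1$), and exploiting $r \leq q$ (equivalent to $(p-1)(q-1) \geq 0$). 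In the boundary case $w(x) > 0 \geq w(y)$ one has $w_+(y) = 0$ and $U - V \geq w(x)$, so Simon's inequality again gives $J_p(U) - J_p(V) \geq c_p w(x)^{p-1}$ and the target follows since $q^{p-1} \geq 1$; if $w(x), w(y) \leq 0$ then both sides vanish identically.

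The principal obstacle is securing the sharp $q^{-(p-1)}$ dependence in the constant, because Simon's inequality only controls $(A-B)^{p-1}$ whereas the target involves $(A^r - B^r)^p$; bridging the two requires the precise H\"older identity $(A^r - B^r)^p \leq r^p (A - B)^{p-1}(A^q - B^q)/q$ combined with $r \leq q$, and a careful sign-case analysis is needed to handle configurations in which $U$ and $V$ have signs decoupled from those of $w(x)$ and $w(y)$. Once the pointwise inequality is established, multiplying by the positive kernel $|x-y|^{-(N+sp)}$ and integrating over $\mathbb{R}^{2N}$ yields the stated estimate with a constant depending only on $p$.
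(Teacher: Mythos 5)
The paper does not prove this lemma; it is quoted verbatim from \cite[Lemma 2.3]{SH}, so there is no in-paper argument to compare against. Your overall architecture is the standard one and is carried out correctly: reduce the duality pairing to a double integral of $[J_p(U)-J_p(V)][\cdot]$ against the kernel, bound $J_p(U)-J_p(V)\ge c_p(U-V)^{p-1}$ when $U-V\ge 0$ via Simon's inequality, and control $(A^r-B^r)^p$ by $(A-B)^{p-1}(A^q-B^q)$ through H\"older applied to $r\int_B^A t^{r-1}\,dt$ with exponents $p$ and $p/(p-1)$, noting $(r-1)p=q-1$; the final comparison $q/r^p\ge q^{1-p}\Leftrightarrow q\ge r\Leftrightarrow (p-1)(q-1)\ge 0$ is exactly right.

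The one genuine problem is your interpretation of $(u-v)^q$ as $w_+^q$. In this paper the lemma is applied with $v_n^\beta$ for a sign-changing $v_n=w_n-w_0$, and the step $\big[\int_\Omega|v_n|^{kp_s^*}\,dx\big]^{p/p_s^*}\le C\|v_n^{(p+\beta-1)/p}\|_{s,p}^p$ only makes sense if the power is the odd extension $t^q:=|t|^{q-1}t$; with the truncated power one would only control $\int_\Omega(v_n)_+^{kp_s^*}\,dx$. Under the odd-power convention your remark that both sides vanish when $w(x),w(y)\le 0$ is no longer correct (that region contributes), so the case analysis as written proves a strictly weaker statement. The fix is routine and you should spell it out: either (a) apply your $w_+$-estimate a second time with $u,v$ swapped to get the $w_-$-version, add the two, and use $w^q=w_+^q-w_-^q$ together with $|w^r(x)-w^r(y)|^p\le 2^{p-1}\left(|w_+^r(x)-w_+^r(y)|^p+|w_-^r(x)-w_-^r(y)|^p\right)$, absorbing the $2^{p-1}$ into $C(p)$; or (b) redo the pointwise case analysis for the odd power directly, where after normalizing $w(x)\ge w(y)$ the only new case is $w(x)\ge 0\ge w(y)$, and with $a=w(x)$, $b=|w(y)|$ one has $(a+b)^{p-1}(a^q+b^q)\ge(\max(a,b))^{p+q-1}\ge 2^{-p}(a^r+b^r)^p$, which closes the estimate since $q^{p-1}\ge 1$.
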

\noindent The strong Maximum Principle for fractional $p$-Laplacian is given as follows:
\begin{lemma}{\rm {\cite[Lemma 2.3] {mosq}}}{\rm(Strong Maximum Principle)}\label{max}
	Let $u\in W_0^{s,p}(\Om)$ satisfy 
	\begin{equation}
	\left.\begin{array}{rllll}
	(-\Delta)_p^s u
	&\geq 0 \text{\;\;\;\;  weakly in\;}\;
	\Om,\\
	u&\geq 0 \;\;\; \text{ in } \RR^N \setminus \Om.
	\end{array}
	\right\}
	\end{equation}
	Then $u$ has a lower semi-continuous representative in $\Om$, which is either identically $0$ or positive.
\end{lemma}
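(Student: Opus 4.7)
The plan is a three-step program: first establish $u\ge 0$ a.e.\ in $\RR^N$, then upgrade $u$ to a lower semicontinuous representative, and finally conclude via a nonlocal dichotomy argument based on the weak Harnack inequality.

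\textbf{Step 1 (non-negativity).} I would test the supersolution inequality $\langle(-\Delta)_p^s u,\varphi\rangle\ge 0$ against $\varphi=u^-$, which is admissible because $u\ge 0$ in $\RR^N\setminus\Om$ forces $u^-\in W_0^{s,p}(\Om)$ with $u^-\ge 0$ in $\Om$. Splitting the resulting double integral into the four sign combinations of $u(x),u(y)$ and using the pointwise inequality
\[
|a-b|^{p-2}(a-b)\bigl(a^- - b^-\bigr)\;\le\; -|a^- - b^-|^{p},\qquad a,b\in\RR,
\]
the weak inequality collapses to $\|u^-\|_{s,p}^{p}\le 0$, so $u^-\equiv 0$ and $u\ge 0$ a.e.\ in $\RR^N$.

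\textbf{Step 2 (lsc representative).} By the local boundedness and De Giorgi class theory for the fractional $p$-Laplacian developed by Di Castro-Kuusi-Palatucci, the nonnegative weak supersolution $u\in W_0^{s,p}(\Om)$ is locally bounded from below in $\Om$; the standard pointwise redefinition $\widetilde u(x_0):=\lim_{r\to 0^+}\mathop{\mathrm{ess\,inf}}_{B_r(x_0)}u$ then provides a lower semicontinuous representative, still denoted $u$.

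\textbf{Step 3 (dichotomy).} Set $A:=\{x_0\in\Om : u\equiv 0 \text{ a.e.\ in some neighbourhood of } x_0\}$, which is open by construction. Suppose $u(x_0)=0$ at some $x_0\in\Om$ in the lsc representative and pick $r>0$ with $B_{2r}(x_0)\subset\Om$. The weak Harnack inequality for nonnegative supersolutions of $(-\Delta)_p^s$ yields $t,C>0$ with
\[
\inf_{B_r(x_0)}u\;\ge\; C\left(\frac{1}{|B_{2r}|}\int_{B_{2r}(x_0)}u^{t}\,dx\right)^{1/t}-\mathrm{Tail}(u^-;x_0,2r);
\]
since $u^-\equiv 0$ globally the tail term vanishes, and $u(x_0)=0$ forces the left-hand side to be zero, so $u\equiv 0$ a.e.\ in $B_{2r}(x_0)$ and $x_0\in A$. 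Conversely, if $x_1\in\Om\setminus A$ then by the contrapositive of the previous implication $u(x_1)>0$, and lower semicontinuity gives $u>0$ in a whole neighbourhood of $x_1$, disjoint from $A$; hence $\Om\setminus A$ is also open. Working on each connected component of $\Om$, either $A=\Om$ (giving $u\equiv 0$) or $A=\emptyset$ (giving $u>0$ throughout the lsc representative), which is the stated dichotomy.

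\textbf{Main obstacle.} The delicate point is the nonlinear weak Harnack estimate invoked in Step 3: the logarithmic-test trick available for $p=2$ does not transfer, and a careful Moser iteration on supersolutions is required, with the nonlocal tail term typically obstructing the argument. The saving grace in our setting is the global hypothesis $u\ge 0$ in $\RR^N\setminus\Om$, which after Step 1 makes the $u^-$-tail vanish identically; the Di Castro-Kuusi-Palatucci machinery then applies cleanly and the argument closes.
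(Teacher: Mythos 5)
The paper does not prove this lemma---it is quoted, with citation, from Mosconi and Squassina \cite[Lemma~2.3]{mosq}, so there is no in-paper argument to compare against. Your blind proof reconstructs the standard weak-Harnack route to a nonlocal strong maximum principle and is essentially sound. Step~1 (testing with $u^-$ and using the pointwise inequality $|a-b|^{p-2}(a-b)(a^--b^-)\le -|a^--b^-|^p$) correctly gives $u\ge 0$ a.e.\ in $\RR^N$. In Step~3, the passage from ``$u(x_0)=0$ in the lsc representative'' to ``the left-hand side of the Harnack inequality vanishes at the fixed radius $r$'' needs one extra line: since $\rho\mapsto\mathop{\mathrm{ess\,inf}}_{B_\rho(x_0)}u$ is non-increasing in $\rho$, one has $u(x_0)=\sup_{\rho>0}\mathop{\mathrm{ess\,inf}}_{B_\rho(x_0)}u$; combined with $u\ge 0$ a.e.\ from Step~1 this forces $\mathop{\mathrm{ess\,inf}}_{B_r(x_0)}u=0$ for every admissible $r$, not merely in the limit $r\to 0^+$. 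With that observation the weak Harnack estimate (tail term absent because $u\ge 0$ globally) yields $u\equiv 0$ a.e.\ in $B_{2r}(x_0)$, and the openness of both $A$ and $\Om\setminus A$ then closes the dichotomy as you describe. One cosmetic remark on Step~2: ``locally bounded from below'' is trivial once $u\ge 0$; the substantive content is that the pointwise regularisation $\widetilde u$ is in fact a representative of $u$, which again rests on the Harnack/De~Giorgi machinery, so Steps~2 and~3 are really consuming the same input.
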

\noindent Recalling \cite[Theorem 1.1]{fine-bd}, we have the  regularity result for the following problem:
\begin{equation}\label{nl1}
(-\Delta)_p^s u =g\text{ in } \Om, \quad
u=0 \text{ in } \RR^N\setminus \Om. 
\end{equation}
\begin{proposition}\label{regularity}
	Let $2\leq p<\infty$ and $\Om$ be a  bounded domain in $\RR^N$ with $C^{1,1}$ boundary. Let $g\in L^{\infty}(\Om)$ and $h=0.$ Then there exist   $C$ and $\al,$ both positive and depending upon $s,p,\Om$ such that  any weak solution $u\in W_0^{s,p}(\Om)$ of \eqref{nl1} satisfies
	$$\|u\|_{C_d^{0,\al}(\ol{\Om})}\leq C \|g\|_{L^{\infty}(\Om)}^{\frac{1}{p-1}}.$$\end{proposition}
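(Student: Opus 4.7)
The plan is to establish this weighted boundary regularity by combining a global $L^\infty$ bound, a barrier argument giving $|u|\lesssim \|g\|_{L^\infty}^{1/(p-1)} d^s$, and an oscillation-decay iteration of $u/d^s$ on dyadic scales, mirroring the scheme developed by Iannizzotto--Mosconi--Squassina for the degenerate case $p\geq 2$.

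First I would prove the $L^{\infty}$ estimate $\|u\|_{L^\infty(\Omega)}\leq C\|g\|_{L^\infty(\Omega)}^{1/(p-1)}$. This follows from a De Giorgi/Moser-type iteration: testing the equation with truncations $(u-k)_+^{q}$ and applying Lemma \ref{mon} together with the Sobolev embedding $W_0^{s,p}\hookrightarrow L^{p_s^*}$, one gets a reverse Hölder inequality that iterates to $L^\infty$. The exponent $\tfrac{1}{p-1}$ is dictated by the homogeneity $(-\Delta)_p^s(\lambda u)=\lambda^{p-1}(-\Delta)_p^s u$. Next, I would build barriers based on the distance function. A direct computation shows that for $x$ in a boundary strip $\{d<\delta\}$, the function $d^s$ satisfies $(-\Delta)_p^s d^s \in L^\infty$; hence for a suitable constant $M=C\|g\|_{L^\infty}^{1/(p-1)}$, the functions $\pm M d^s$ act as super/sub-solutions, yielding $|u(x)|\leq M\, d(x)^s$ by the comparison principle available in the $p\geq 2$ setting.

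The main step is the oscillation decay for $v:=u/d^s$ near $\partial\Omega$. For $x_0\in\partial\Omega$ and radii $r_k=\rho^k r_0$ with $\rho\in(0,1)$, I would iteratively produce constants $a_k,b_k$ with $b_k-a_k\leq C\rho^{\alpha k}$ and $a_k\leq v\leq b_k$ on $B_{r_k}(x_0)\cap\Omega$. The induction step is obtained by rescaling $u_k(x):=\rho^{-\alpha k}(u(x_0+r_k x)-\tilde a_k d^s)$, observing that $u_k$ solves a problem of the same form with right-hand side still bounded (using the $L^\infty$ bound on $(-\Delta)_p^s d^s$), and then invoking a weak Harnack/half-Harnack type inequality together with the barrier from Step 2. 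This gives $v\in C^{0,\alpha}$ up to $\partial\Omega$ with the claimed dependence on $\|g\|_{L^\infty}^{1/(p-1)}$.

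Finally, I would glue this boundary estimate with the interior Hölder regularity from \cite{squa}, which provides $u\in C^{0,\beta}_{\mathrm{loc}}(\Omega)$ with quantitative bounds in terms of $\|g\|_{L^\infty}$, through a standard covering argument comparing the intrinsic $C^{0,\alpha}_d$ norm on balls $B_{d(x)/2}(x)$ with boundary contributions. The hard part will be the oscillation-decay induction: the degeneracy of $\fpl$ and the nonlocal tail terms mean that one cannot rely on linear superposition, so the comparison principle and the sharp bound on $(-\Delta)_p^s d^s$ must be used very carefully, and the rescaled functions must be controlled in $L^\infty(\mathbb R^N)$ (not just locally) to keep the tail contributions of the operator under control across scales.
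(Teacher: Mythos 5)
The paper does not prove this proposition at all; it is recalled verbatim from \cite[Theorem~1.1]{fine-bd} and used as a black box for the remainder of the argument (the sentence immediately preceding the statement reads ``Recalling \cite[Theorem 1.1]{fine-bd}, \dots''). Your proposal is therefore not an alternative to a proof in the paper but a reconstruction of the proof of the \emph{cited external theorem}, and in outline it faithfully reproduces the architecture of \cite{fine-bd} (which builds on the global H\"older regularity of \cite{squa} and on the Ros-Oton--Serra scheme from the $p=2$ case): an $L^\infty$ estimate by iteration, a barrier comparison yielding $|u|\leq C\|g\|_{L^\infty}^{1/(p-1)} d^s$, an oscillation decay for $u/d^s$ on dyadic boundary scales, and a covering argument gluing this to the interior estimates of \cite{squa}.

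Two points in the sketch deserve a caveat. First, the barrier step is stated too optimistically: it is not literally true that $(-\Delta)_p^s d^s\in L^\infty(\Omega)$; the precise statement is a bound on $(-\Delta)_p^s$ of a suitable modification of $d^s$ (or of $d^s$ restricted to a boundary strip of a $C^{1,1}$ domain), and establishing this for $p\neq 2$ is already a delicate computation carried out in \cite{squa,fine-bd}, not a ``direct computation.'' Second, the oscillation-decay induction is the genuinely hard step, and your description compresses it considerably: the proof in \cite{fine-bd} hinges on a nonlocal weak Harnack inequality formulated with an explicit tail quantity, and, as you correctly flag, the rescaled functions must be controlled in $L^\infty(\mathbb R^N)$ across scales to keep the tails under control; the bookkeeping needed to close the induction with the right dependence on $\|g\|_{L^\infty}^{1/(p-1)}$ is substantial. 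Your outline is a reasonable high-level map of the cited proof, but it is nowhere near a proof on its own; for the purposes of the present paper, the citation of \cite{fine-bd} suffices and no further argument is attempted or needed.
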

\noindent Now we recall the following  crucial result to handle the nonlocal Choquard type of nonlinearity:
\begin{proposition} {\rm(Hardy–Littlewood–Sobolev inequality) }
	Let $q_1, q_2 > 1$ and
	$0 < \mu < N$ with $1/q_1 + \mu/N + 1/q_2 = 2,$ $g_1 \in L^{q_1}(\RR^N)$ and $g_2 \in L^{q_2}(\RR^N).$ There
	exists a sharp constant $C(q_1,q_2,N, \mu),$ independent of $g, h,$ such that
	\begin{equation}\label{HLS}
	\int_{\RR^N}\int_{{\RR^N}}\frac{g_1(x)g_2(y)}{|x - y|^\mu} dxdy
	\leq C(q_1,q_2,N, \mu)\|g_1\|_{L^{q_1}(\RR^N)}
	\|g_2\|_{L^{q_2}(\RR^N)}.
	\end{equation}
\end{proposition}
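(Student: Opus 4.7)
The plan is to reduce the Hardy--Littlewood--Sobolev inequality to a weak-type bound for the Riesz-type potential and then invoke Marcinkiewicz interpolation. First, I would rewrite the left-hand side of \eqref{HLS} as a pairing: define $I_\mu g_1(x):=\int_{\RR^N}|x-y|^{-\mu}g_1(y)\,dy$, so the double integral equals $\int_{\RR^N} g_2(x)\,I_\mu g_1(x)\,dx$. By H\"older's inequality in $x$, it is enough to prove that $I_\mu \colon L^{q_1}(\RR^N)\to L^{r}(\RR^N)$ is bounded with $r=q_2/(q_2-1)$; the hypothesis $1/q_1+\mu/N+1/q_2=2$ is exactly the scaling identity $1/q_1+\mu/N=1+1/r$.

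Next, I would establish a weak-type $(q,r_q)$ estimate for $I_\mu$, valid for every $q\in(1,N/\mu)$ with the exponent $r_q$ determined by $1/r_q=1/q+\mu/N-1$. The key observation is that the kernel $K(x)=|x|^{-\mu}$ lies in weak-$L^{N/\mu}(\RR^N)$, since $|\{K>\lambda\}|=c_N\lambda^{-N/\mu}$. For each height $\lambda>0$, I would split $K=K\chi_{B(0,R)}+K\chi_{B(0,R)^c}$ and use Young's inequality on each piece: the inner kernel has finite $L^1$ norm of order $R^{N-\mu}$, giving an $L^q$ control of the corresponding convolution, while the outer kernel is uniformly bounded by $R^{-\mu}$ and has finite $L^{q'}$ mass, giving an $L^\infty$ control. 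Optimizing the radius $R=R(\lambda)$ against $\|g_1\|_{L^q}$ then yields $|\{|I_\mu g_1|>\lambda\}|\leq C\lambda^{-r_q}\|g_1\|_{L^q}^{r_q}$, which is the desired weak-type estimate.

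Finally, applying Marcinkiewicz interpolation between two such weak-type bounds at exponents $q_1\pm\varepsilon$ upgrades the estimate to the strong type $\|I_\mu g_1\|_{L^r}\leq C(q_1,q_2,N,\mu)\,\|g_1\|_{L^{q_1}}$, and pairing back against $g_2$ by H\"older produces \eqref{HLS}. The main obstacle I anticipate is the careful bookkeeping in the weak-type splitting, where the exponent of $\lambda$ must match exactly; an alternative route, which I would keep in reserve, is to apply Riesz's rearrangement inequality to reduce to radially symmetric decreasing $g_1, g_2$ and then use the layer-cake formula to evaluate the extremal integral directly, recovering even the sharp constant of Lieb--Loss. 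Since the statement here needs only a finite (not sharp) constant, the Marcinkiewicz path suffices.
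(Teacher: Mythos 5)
The paper does not prove this proposition: it simply recalls the Hardy--Littlewood--Sobolev inequality as a classical fact (traceable to Lieb and Lieb--Loss) and uses it as a tool throughout. There is therefore no "paper proof" to compare against; your task reduces to checking that your sketch is a valid argument. It essentially is --- duality to reduce to boundedness of the Riesz potential $I_\mu=|\cdot|^{-\mu}*$, a weak-type bound via splitting the kernel at a radius $R(\lambda)$ and Young/H\"older on each piece, and Marcinkiewicz interpolation to upgrade to strong type. The alternative you hold in reserve (Riesz rearrangement plus layer cake) is Lieb's route to the sharp constant and would also work; since the paper only needs a finite constant, the interpolation route is the lighter choice.

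One slip worth flagging: you state the weak-type $(q,r_q)$ bound for ``every $q\in(1,N/\mu)$.'' The correct range is $q\in\bigl(1,\,N/(N-\mu)\bigr)$. This is forced both by the scaling relation $1/r_q=1/q+\mu/N-1$, which requires $1/q>(N-\mu)/N$ for $r_q$ to be positive, and by the tail estimate: $K\chi_{B_R^c}\in L^{q'}$ precisely when $\mu q'>N$, i.e.\ $q<N/(N-\mu)$. For $\mu<N/2$ the interval $(1,N/\mu)$ is strictly larger, and on the extra piece both the exponent and the tail integral break down. Since the hypothesis $1/q_1+\mu/N+1/q_2=2$ with $q_2>1$ forces $q_1<N/(N-\mu)$ anyway, the proof still goes through once the range is corrected, but as written the sketch asserts weak-type bounds that do not exist. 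The rest of the bookkeeping (choice of $R(\lambda)$, Chebyshev on the near part, and the Marcinkiewicz step with exponents $q_1\pm\varepsilon$ inside the corrected interval) is sound.
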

\noi Motivated by the inequality \eqref{HLS}, we assume the following hypothesis on the continuous function $f:\Om\times\RR\to\RR$:
\begin{itemize}
	\item[$({\bf H})$] There exists some constant $K_0>0$ such that  for a.e. $x\in\Om$ and for all $t\in\RR:$ $$|f(x,t)|\leq K_0\left(1+|t|^{r-1}\right)$$
	with $1<r\leq p_{\mu,s}^*,$	where $p_{\mu,s}^*:=\frac{(pN -{p{\mu}/2})}{(N - ps)}$ denotes the critical exponent in the sense of Hardy-Littlewood-Sobolev inequality.
\end{itemize}

Observe that $p_{\mu,s}^*=\frac{p_s^*}{{\frac{2N}{2N-\mu}}}< p_s^*.$ 
\begin{definition}{\rm(Weak solution of \eqref{mainprob})}
	$u\in W_0^{s,p}(\Om)$ is said to be weak solution of \eqref{mainprob}, if for all $w\in W_0^{s,p}(\Om)$ 
	{	\begin{align}\label{weak}
		%			m\left(\|u\|_{s,p}^p\right)
		\int_{{\RR^N}}\int_{\RR^N}\frac{|
			u(x)-u(y)|^{p-2}(u(x)-u(y))(w(x)-w(y))}{|
			x-y|^{N+sp}}dxdy=\int_{\Om}\int_{\Om}\frac{F(y,u)f(x,u)}{|x-y|^{\mu}}w(x)~dxdy.
		\end{align}}
\end{definition}
\begin{definition}
	The energy functional $J:W_0^{s,p}(\Om)\to\RR$ associated to the problem \eqref{mainprob} is defined as
	\begin{align}\label{energy}
	J(u)=\frac{1}{p}\|u\|_{{s,p}}-\frac{1}{2}\int_{\Om}\int_{\Om}\frac{F(y,u) F(x,u) }{|x-y|^{\mu}}dxdy.
	\end{align}
\end{definition}
Note that, \eqref{HLS} ensures that the nonlocal terms present in the right-hand side of both \eqref{weak} and \eqref{energy} are well defined.
Now we are in a position to state the main results of this article.
\begin{theorem}\label{reg}
	Let  $2\leq p<\infty$, $s\in(0,1)$ with  $sp<N$ and $\Om$ be a  bounded domain in $\RR^N$ with $C^{1,1}$ boundary. Suppose {$(\bf H)$} holds.
	Then there exists  $\al\in(0,s]$ such that any weak solution
	 $u\in W_0^{s,p}(\Om)$ of \eqref{mainprob} belongs to $\ L^\infty(\RR^N)\cap C^{0,\al}_d(\ol\Om).$
\end{theorem}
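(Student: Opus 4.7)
The plan is a two-stage proof: first pass from the given $W_0^{s,p}$-regularity to an $L^\infty(\mathbb R^N)$ bound via a Moser-type iteration driven by the Hardy–Littlewood–Sobolev inequality; second, feed this bound back into the equation to get a bounded right-hand side and invoke Proposition \ref{regularity} to obtain the weighted Hölder regularity.

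For the $L^\infty$ bound, I would work with $u\geq 0$ (by splitting into $u^\pm$), introduce the truncation $u_K:=\min\{u,K\}$, and for $\beta\ge 1$ test the weak formulation \eqref{weak} against $\varphi = u\,u_K^{p(\beta-1)}$. Using Lemma \ref{mon} (valid since $p\geq 2$), the energy term is bounded below by $\|u u_K^{\beta-1}\|_{s,p}^p$ up to a factor $C\beta^{1-p}$, and then the fractional Sobolev embedding $W_0^{s,p}\hookrightarrow L^{p_s^*}$ gives
\[
\|u u_K^{\beta-1}\|_{L^{p_s^*}(\Om)}^{p}\leq C\beta^{p-1}\int_{\Om}\!\int_{\Om}\frac{F(y,u)f(x,u)\,u(x)u_K(x)^{p(\beta-1)}}{|x-y|^{\mu}}\,dy\,dx.
\]
On the right, HLS with the conjugate pair $q_1=q_2=\tfrac{2N}{2N-\mu}$ yields
\[
\mathrm{RHS}\leq C\,\|F(\cdot,u)\|_{L^{\frac{2N}{2N-\mu}}(\Om)}\,\bigl\|f(\cdot,u)\,u u_K^{p(\beta-1)}\bigr\|_{L^{\frac{2N}{2N-\mu}}(\Om)}.
\]
Under hypothesis (H) we have $|F(x,t)|\leq C(|t|+|t|^r)$, $|f(x,t)|\leq C(1+|t|^{r-1})$, and the crucial identity $\tfrac{2Nr}{2N-\mu}\leq p_s^*$ (equivalent to $r\le p^*_{\mu,s}$) keeps the first factor controlled by $\|u\|_{L^{p_s^*}}$-quantities, while the second factor is expressed in terms of norms of $u u_K^{\beta-1}$ at integrabilities strictly below $\beta p_s^*$. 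An induction on a suitably chosen sequence $\beta_n\to\infty$, followed by $K\to\infty$ and a standard Moser-type geometric iteration, then produces $u\in L^q(\Om)$ for every $q<\infty$ and ultimately $u\in L^\infty(\Om)$; since $u=0$ outside $\Om$ this gives $u\in L^\infty(\mathbb R^N)$.

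The main obstacle is the critical case $r=p^*_{\mu,s}$, where the iteration does not automatically gain integrability at each step. I expect to overcome this by exploiting the absolute continuity of the integral for $u\in L^{p_s^*}$: writing $F(y,u)=F(y,u)\chi_{\{|u|\leq M\}}+F(y,u)\chi_{\{|u|>M\}}$, the second (critical) part can be made to have arbitrarily small $L^{\frac{2N}{2N-\mu}}(\Om)$-norm uniformly in the iteration by taking $M$ large, which permits absorbing the corresponding critical contribution into the left-hand Sobolev term; the remaining truncated part behaves subcritically and drives the iteration forward, in the spirit of the Brezis–Kato / De Giorgi truncation method adapted to HLS.

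Once $\|u\|_{L^\infty(\mathbb R^N)}<\infty$ is in hand, (H) yields $|f(x,u)|\leq C$, and since $\mu<N$,
\[
\Bigl|\int_\Om \tfrac{F(y,u(y))}{|x-y|^\mu}\,dy\Bigr|\leq C(\|u\|_\infty,r)\int_\Om \tfrac{dy}{|x-y|^\mu}\leq C'(\Om,\mu,N)
\]
uniformly for $x\in\Om$. Hence the whole right-hand side of \eqref{mainprob} is in $L^\infty(\Om)$, and Proposition \ref{regularity} applied to the problem \eqref{nl1} with this bounded datum delivers $u\in C^{0,\al}_d(\overline{\Om})$ for some $\al\in(0,1)$, with $\al\leq s$ due to the Hölder exponent restrictions inherent in the fine-boundary estimate, completing the proof.
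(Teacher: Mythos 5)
Your two–stage plan (Moser/De Giorgi iteration via HLS $\Rightarrow L^\infty$, then bounded Riesz potential $\Rightarrow$ $L^\infty$ right-hand side $\Rightarrow$ Proposition \ref{regularity}) is exactly the paper's strategy, and your treatment of the critical case by splitting the Choquard kernel into a small critical tail plus a subcritical truncation is the same Brezis–Kato-type absorption the paper uses (they split $\Om$ into $\{|u|<\Lambda\}$ and $\{|u|\geq\Lambda\}$ and let $C(\Lambda)\to 0$, you split $F(y,u)$; same mechanism). However, the key technical step in your $L^\infty$ argument does not go through as written. Lemma \ref{mon} is stated only for $u,v\in W_0^{s,p}(\Om)\cap L^\infty(\Om)$, so applying it to the solution $u$ (whose boundedness is precisely what you are trying to prove, taking $v=0$) is circular. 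In addition, even if $u$ were bounded, Lemma \ref{mon} with $v=0$ relates $\|u^{(p+q-1)/p}\|_{s,p}^p$ to the pairing of $(-\Delta)_p^s u$ with $u^q$, not with your proposed test function $u\,u_K^{p(\beta-1)}$, which still carries an unbounded factor $u$ and whose admissibility in $W_0^{s,p}$ would itself need justification.

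The paper avoids this by working entirely at the level of truncations. They first test with $\psi\,|h_\epsilon'(u)|^{p-2}h_\epsilon'(u)$ for $h_\epsilon(t)=(\epsilon^2+t^2)^{1/2}$ and use Lemma \ref{lem2} (the convexity inequality of Brasco--Parini) and $\epsilon\to0^+$ to obtain a weak inequality for $|u|$. They then test with $\psi=u_l^\beta$ where $u_l=\min\{|u|,l\}$ and invoke the \emph{pointwise} algebraic inequality Lemma \ref{lem1} (Brasco--Lindgren--Parini), which is formulated directly in terms of $a_m=\min\{a,m\}$ and therefore never presupposes $u\in L^\infty$; this yields a clean lower bound of the form $\|u_l^{(\beta+p-1)/p}\|_{s,p}^p \lesssim \beta^{p-1}\,(\text{RHS pairing})$ that can be pushed through the iteration. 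You would need to replace your use of Lemma \ref{mon} with these two lemmas (or an equivalent truncation-compatible inequality) for the argument to close. The concluding step — $\mu<N$ gives a uniformly bounded Riesz potential once $u\in L^\infty$, hence the datum is in $L^\infty(\Om)$ and Proposition \ref{regularity} gives $u\in C^{0,\al}_d(\overline\Om)$ with $\al\in(0,s]$ — is correct and matches the paper.
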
	
Next, we use Theorem \ref{reg} to have the following result:			

\begin{theorem}\label{thm2}
	Let $2\leq p<\infty,$ $s\in(0,1)$ with  $sp<N$ and $\Om$ be a  bounded domain in $\RR^N$ with $C^{1,1}$ boundary. Suppose {$(\bf H)$} holds. Then for any $w_0\in W_0^{s,p}(\Om)$ the following assertions  are equivalent:
	\begin{itemize}
		\item[$\rm(i)$] there exists $\varrho>0$ such that $J(w_0+ w) \geq J(w_0)$ for all $ w \in W_0^{s,p}(\Om) \cap C^0_d(\overline{\Om})$, $\|w\|_{C_d^0(\ol{\Om})}~\leq \varrho$. 
		\item[$\rm(ii)$] there exists $\delta>0 $ such that $ J(w_0+w) \geq J(w_0) $ for all $  w\in W_0^{s,p}(\Om),$  $\|w\|_{s,p}\leq \delta$. 
		
	\end{itemize}
\end{theorem}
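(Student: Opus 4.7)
The proof splits naturally into two implications. For the easier direction $\rm(ii)\Rightarrow\rm(i)$, I would first observe that (ii) forces $w_0$ to be a critical point of $J$, and hence a weak solution of (\ref{mainprob}); Theorem \ref{reg} then places $w_0$ in $L^\infty(\RR^N)\cap C_d^{0,\al}(\ol\Om)$. A contradiction argument entirely parallel to the one below for $\rm(i)\Rightarrow\rm(ii)$, but with the constrained minimization run over the $C_d^0$-ball, yields (i).

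The substantive content is the direction $\rm(i)\Rightarrow\rm(ii)$, which I would prove by contradiction. Suppose there exist $v_n\in W_0^{s,p}(\Om)$ with $\|v_n\|_{s,p}\to 0$ and $J(w_0+v_n)<J(w_0)$. For each small $\e>0$ consider the constrained problem
$$m_\e\,:=\,\inf\{\,J(w_0+v)\;:\;v\in W_0^{s,p}(\Om),\ \|v\|_{s,p}\le\e\,\}.$$
Existence of a minimizer $v_\e$ follows from the direct method: weak compactness of the $W_0^{s,p}$-ball, weak lower semicontinuity of $\|\cdot\|_{s,p}^p$, and weak continuity of the Choquard term via the Hardy--Littlewood--Sobolev inequality combined with the compact embedding $W_0^{s,p}\hookrightarrow\hookrightarrow L^q$ for $q<\p$ (in the critical case $r=p_{\mu,s}^*$, local concentration-compactness near $v=0$ suffices). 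The failure hypothesis gives $m_\e<J(w_0)$ along a sequence $\e_n\to 0$, whence $v_{\e_n}\not\equiv 0$ and $v_{\e_n}\to 0$ in $W_0^{s,p}$.

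Next, derive the Euler--Lagrange equation for $u_\e:=w_0+v_\e$. When $\|v_\e\|_{s,p}<\e$, $u_\e$ weakly solves (\ref{mainprob}). When $\|v_\e\|_{s,p}=\e$, a Lagrange multiplier $\la_\e\ge 0$ enters and
$$(-\Delta)_p^s u_\e+\la_\e\,p\,(-\Delta)_p^s v_\e=g_\e\quad\text{in }\Om,\quad g_\e(x):=\Bigl(\DD\int_\Om\frac{F(y,u_\e)}{|x-y|^\mu}dy\Bigr)f(x,u_\e).$$
I would rule out the active-constraint case for small $\e$ by testing this relation with $v_\e$, exploiting $\|v_\e\|_{s,p}=\e\to 0$ and the strict inequality $J(u_\e)<J(w_0)$ to force $\la_\e=0$, so that $u_\e$ itself weakly solves (\ref{mainprob}). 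Hypothesis $({\bf H})$ and the Hardy--Littlewood--Sobolev inequality then yield $\|g_\e\|_{L^\infty(\Om)}\le C(\|u_\e\|_{L^\infty(\Om)})$; the boot-strap of Theorem \ref{reg} applied to $u_\e$ (whose $W_0^{s,p}$-norm stays bounded by $\|w_0\|_{s,p}+\e$) produces a uniform $L^\infty(\Om)$-bound on $u_\e$, hence on $g_\e$. Proposition \ref{regularity} therefore delivers $u_\e\in C_d^{0,\al}(\ol\Om)$ with $\|u_\e\|_{C_d^{0,\al}(\ol\Om)}$ bounded independently of $\e$, and the compact embedding $C_d^{0,\al}\hookrightarrow\hookrightarrow C_d^0$ together with $v_\e\to 0$ in $L^p$ forces $\|v_\e\|_{C_d^0(\ol\Om)}\to 0$. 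For $\e$ sufficiently small $\|v_\e\|_{C_d^0(\ol\Om)}\le\varrho$, and hypothesis (i) then gives $J(u_\e)\ge J(w_0)$, contradicting $m_\e<J(w_0)$.

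The principal obstacle is the active-constraint case: unlike the Hilbert ($p=2$) setting, the nonlinearity of $(-\Delta)_p^s$ precludes absorbing $\la_\e p(-\Delta)_p^s v_\e$ into the leading operator, so one must rule out $\la_\e>0$ a priori rather than treat a perturbed equation. A secondary delicate point is verifying that the boot-strap constants in Theorem \ref{reg} are uniform in $\e$, which requires tracking their dependence on $\|u_\e\|_{s,p}$, the Hardy--Littlewood--Sobolev constant, and the parameters in $({\bf H})$.
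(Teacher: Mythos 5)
Your architecture for $\rm(i)\Rightarrow\rm(ii)$ has the right skeleton (contradiction, constrained minimization, Lagrange multiplier, regularity bootstrap, compact embedding into $C_d^0$), but there are two genuine gaps, one of which is the central difficulty the paper is designed to avoid.

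\textbf{The choice of constraint is wrong, and the proposed fix does not work.} You minimize over the $W_0^{s,p}$-ball $\{\|v\|_{s,p}\le\e\}$. The Euler--Lagrange equation in the active-constraint case then carries the term $\la_\e\,p\,(-\Delta)_p^s v_\e$, which (as you correctly note) cannot be absorbed into $(-\Delta)_p^s u_\e$ because the operator is nonlinear. Your remedy is to ``rule out $\la_\e>0$ by testing with $v_\e$ and using $J(u_\e)<J(w_0)$.'' This cannot work as stated: at a strict constrained minimum on the boundary of the ball, a strictly positive multiplier is generic and is not excluded by $J(u_\e)<J(w_0)$ (which only says $J$ is smaller than at the center). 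If you could force $\la_\e=0$, the constrained minimizer would be an unconstrained critical point with $J<J(w_0)$, and no further analysis would be needed — but there is no mechanism to force this. The paper sidesteps this entirely by constraining in an $L^{p_s^*}$-ball centered at $w_0$, i.e.\ $B_{\sigma_n}=\{u:\|u-w_0\|_{L^{p_s^*}(\Om)}\le\sigma_n\}$ with $\sigma_n=\|\tilde w_n-w_0\|_{L^{p_s^*}}$. Then the multiplier term is the pointwise, lower-order quantity $m_n(w_n-w_0)^{p_s^*-1}$, not a nonlocal operator, and the paper shows by a Moser-type iteration (testing the difference equation with $(w_n-w_0)^\beta$ and sending $\beta\to\infty$) that $m_n\|w_n-w_0\|_{L^\infty}^{p_s^*-1}$ is bounded, even though $m_n$ itself may blow up. This is the crux of the argument and it is entirely absent from your proposal.

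\textbf{The critical case requires truncation, not concentration-compactness.} To even get a minimizer of the constrained problem, the functional must be sequentially weakly lower semicontinuous on the ball. When $r=p_{\mu,s}^*$ the Choquard term has critical Hardy--Littlewood--Sobolev growth and, since the embedding $W_0^{s,p}\hookrightarrow L^{p_s^*}$ is not compact, the bare $J$ is not weakly lsc; your appeal to ``local concentration-compactness near $v=0$'' is a placeholder, not an argument. The paper replaces $f$ by the truncation $f_j(x,t)=f(x,T_j(t))$, so the truncated functional $J_{j_n}$ has subcritical nonlocal nonlinearity and \emph{is} sequentially weakly lsc and coercive on the $L^{p_s^*}$-ball; the approximation parameter $j_n$ is then chosen so that $J_{j_n}(\tilde w_n)<J(w_0)=J_{j_n}(w_0)$. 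Your scheme has nothing playing this role, and without it the direct method does not close. Finally, note that the paper's $\rm(ii)\Rightarrow\rm(i)$ direction is \emph{not} ``entirely parallel'': it is a short direct argument (convergence of the Choquard term from $C_d^0$-convergence, then $\limsup\|w_n\|_{s,p}^p\le\|w_0\|_{s,p}^p$, uniform convexity and Brezis--Lieb to upgrade to strong $W_0^{s,p}$-convergence), not a second constrained minimization over a $C_d^0$-ball, which would bring back the unboundedness issues you are trying to avoid.
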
	

\begin{definition}
	Let  $u\in { W}^{s,p}(\RR^N).$ Then\begin{itemize}
		\item[$(i)$] $u$ is a super-solution of \eqref{mainprob}, if  we have $u\geq0$ a.e. $x\in\Om^c$ and for all $v\in{W}^{s,p}_0(\Om)$ with $v\geq0$ a.e. in $\Om$,

		$$\int_{{\RR^N}}\int_{\RR^N}\frac{|
			u(x)-u(y)|^{p-2}(u(x)-u(y))(v(x)-v(y))}{|
			x-y|^{N+sp}}dxdy\geq\int_{\Om}\int_{\Om}\frac{F(y,u)f(x,u)}{|x-y|^{\mu}}v(x);$$
		\item[$(ii)$] $u$ is a sub-solution of \eqref{mainprob}, if  we have $u\leq0$ a.e. $x\in\Om^c$ and for all $v\in{W}^{s,p}_0(\Om)$ with $v\geq0$ a.e. in $\Om$
		$$\int_{{\RR^N}}\int_{\RR^N}\frac{|
			u(x)-u(y)|^{p-2}(u(x)-u(y))(v(x)-v(y))}{|
			x-y|^{N+sp}}dxdy\leq\int_{\Om}\int_{\Om}\frac{F(y,u)f(x,u)}{|x-y|^{\mu}}v(x).$$
	\end{itemize}
\end{definition}
Now we  discuss the following result where Theorem  \ref{thm2} plays an important role to  ensure that, if problem \eqref{mainprob} has a weak subsolution
and a weak supersolution, then it achieves a solution which is also a local minimizer of $J$ in  $W_0^{s,p}(\Om)$. 
\begin{theorem}\label{thmch4}Let $2< p<\infty,$ $s\in(0,1)$ with  $sp<N$ and $\Om$ be a bounded domain in $\RR^N$ with $C^{1,1}$ boundary. Let  $(\bf{H})$ hold
	and $f(x,\cdot)$  be non decreasing function in $\RR$ for all $x \in \Om$. Suppose $\underline{v}, \overline{v} \in W_0^{s,p}(\Om)$ are a weak subsolution and a
	weak supersolution, respectively to \eqref{mainprob}, which are not solutions, such that $\underline v\leq \overline v$. Then, there exists  a solution $ v_0\in W_0^{s,p}(\Om)$ to \eqref{mainprob} 
	such that $\underline{v} \leq v_0 \leq \overline{v}$ a.e in $\Om$ and $v_0$ is a local minimizer of $J$ in $W_0^{s,p}(\Om)$. 
\end{theorem}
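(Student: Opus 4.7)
The plan is to combine a truncation-and-minimization argument for a modified functional with the Sobolev versus Hölder minimizer equivalence already supplied by Theorem \ref{thm2}. First I would introduce the truncated nonlinearity
$$\tilde f(x,t) := f\bigl(x,T(x,t)\bigr), \qquad T(x,t) := \min\{\max\{t,\underline{v}(x)\},\overline{v}(x)\},$$
with primitive $\tilde F(x,t)=\int_0^t \tilde f(x,\tau)\,d\tau$, and consider the truncated functional $\tilde J$ defined exactly like \eqref{energy} but with $F$ replaced by $\tilde F$. Since $\underline{v},\overline{v}\in W_0^{s,p}(\Omega)$ are weak sub/super-solutions and hence lie in $L^\infty(\Omega)\cap C_d^{0,\alpha}(\overline{\Omega})$ by Theorem \ref{reg}, the function $\tilde f$ is uniformly bounded, so Hardy--Littlewood--Sobolev makes the nonlocal part of $\tilde J$ a lower-order perturbation of $\tfrac1p\|\cdot\|_{s,p}^p$. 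Consequently $\tilde J$ is coercive and sequentially weakly lower semicontinuous on $W_0^{s,p}(\Omega)$ and attains its global minimum at some $v_0\in W_0^{s,p}(\Omega)$.

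Next I would show $\underline{v}\le v_0\le\overline{v}$ a.e.\ in $\Omega$. Plugging $(v_0-\overline{v})^+$ as a test function in the Euler--Lagrange identity for $\tilde J$ at $v_0$ and in the super-solution inequality for $\overline{v}$, subtracting, and exploiting the monotonicity of $f(x,\cdot)$ (so that $\tilde F(y,v_0(y))\le F(y,\overline{v}(y))$ and $\tilde f(x,v_0(x))=f(x,\overline{v}(x))$ on $\{v_0>\overline{v}\}$), one gets
$$\langle (-\Delta)_p^s v_0-(-\Delta)_p^s\overline{v},\,(v_0-\overline{v})^+\rangle\le 0,$$
whence Lemma \ref{mon} forces $(v_0-\overline{v})^+\equiv 0$. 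The symmetric argument with $(\underline{v}-v_0)^+$ gives $v_0\ge \underline{v}$. Once $v_0$ sits in the order interval, $\tilde f(x,v_0)=f(x,v_0)$ and $\tilde F(\cdot,v_0)=F(\cdot,v_0)$, so $v_0$ is a weak solution of \eqref{mainprob} with $\underline{v}\le v_0\le\overline{v}$ a.e.\ in $\Omega$.

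Finally I would promote $v_0$ to a local minimizer of $J$. Because $\underline{v},\overline{v}$ are assumed not to be solutions, $v_0-\underline{v}\not\equiv 0$ and $\overline{v}-v_0\not\equiv 0$. Applying Theorem \ref{reg} gives $v_0\in C_d^{0,\alpha}(\overline{\Omega})$, and a strong comparison principle for $(-\Delta)_p^s$ (Lemma \ref{mon} combined with the strong maximum principle in Lemma \ref{max}, together with the weighted boundary behavior from Theorem \ref{reg}) yields the quantitative bounds $v_0-\underline{v}\ge c\,d^s$ and $\overline{v}-v_0\ge c\,d^s$ in $\Omega$ for some $c>0$. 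Hence there is $\varrho>0$ such that every $w\in W_0^{s,p}(\Omega)\cap C_d^0(\overline{\Omega})$ with $\|w\|_{C_d^0(\overline{\Omega})}\le \varrho$ satisfies $\underline{v}\le v_0+w\le \overline{v}$ pointwise in $\Omega$. On this $C_d^0$-ball $J$ and $\tilde J$ agree at $v_0+w$, so $J(v_0+w)=\tilde J(v_0+w)\ge \tilde J(v_0)=J(v_0)$, i.e., $v_0$ is a local minimizer of $J$ in the $C_d^0(\overline{\Omega})$-topology. Theorem \ref{thm2} upgrades this to the $W_0^{s,p}(\Omega)$-topology, finishing the proof.

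The delicate step I expect to absorb the bulk of the technical work is the strong comparison inequality $\overline{v}-v_0\gtrsim d^s$ (and its counterpart for $\underline{v}$). Unlike the Hilbert case $p=2$, the operator $(-\Delta)_p^s$ is genuinely nonlinear so $(-\Delta)_p^s(\overline{v}-v_0)\ne (-\Delta)_p^s \overline{v}-(-\Delta)_p^s v_0$; moreover the Choquard coupling propagates comparisons across the $y$-variable, which complicates a direct use of the linear strong maximum principle. Squeezing out a distance-weighted gap has to be done by carefully combining the Simon-type inequality of Lemma \ref{mon} with the fine boundary behavior provided by Theorem \ref{reg} and the interior strict positivity supplied by Lemma \ref{max}.
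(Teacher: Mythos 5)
Your proposal follows essentially the same architecture as the paper: truncate $f$ between $\underline v$ and $\overline v$, produce a global minimizer $v_0$ of the truncated functional (the paper obtains $v_0$ via the pseudomonotone-operator surjectivity theorem \cite[Theorem 2.99]{CLM} applied to $(-\Delta)_p^s+T$, where $T$ is the truncated Choquard--Nemytskii map, rather than by direct minimization; both routes produce the same global minimizer of the truncated energy), pin $v_0$ in the order interval, show a $C_d^0(\overline\Omega)$-ball around $v_0$ stays inside the interval, and upgrade with Theorem \ref{thm2}. However, the step you yourself flag as delicate contains a genuine gap: the distance-weighted estimates $\overline v-v_0\ge Rd^s$ and $v_0-\underline v\ge Rd^s$ cannot be extracted from Lemma \ref{mon}, Lemma \ref{max} and Theorem \ref{reg} alone. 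Lemma \ref{max} gives only strict interior positivity, and weighted H\"older regularity controls $(\overline v-v_0)/d^s$ from above, not away from zero near $\partial\Omega$; a priori this quotient could vanish at a boundary point, which ruins the claim that every $C_d^0$-small perturbation of $v_0$ stays in $[\underline v,\overline v]$. The indispensable missing ingredient is the Hopf lemma for the fractional $p$-Laplacian, \cite[Theorem 1.5]{DQ}: the paper first uses the monotonicity of $f(x,\cdot)$, the ordering, and the definition of $\widehat f$ to check that $\overline v-v_0$ is a nonnegative weak supersolution of $(-\Delta)_p^s w\ge 0$ not identically zero (here the hypothesis that $\overline v$ is not itself a solution is used), and only then applies Lemma \ref{max} together with \cite[Theorem 1.5]{DQ} to obtain the $d^s$-gap.

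Two smaller misattributions. Theorem \ref{reg} is stated for weak solutions of \eqref{mainprob}, not for sub/super-solutions, so you cannot invoke it to place $\underline v,\overline v$ in $L^\infty(\Omega)\cap C_d^{0,\alpha}(\overline\Omega)$; the paper sidesteps this by keeping the bound $|\widehat f(x,t)|\le C_1(1+|\underline v|^{r-1}+|\overline v|^{r-1})$ in integral form rather than assuming boundedness. Also, to pass from $\langle(-\Delta)_p^s v_0-(-\Delta)_p^s\overline v,\,(v_0-\overline v)^+\rangle\le 0$ to $(v_0-\overline v)^+\equiv 0$, Lemma \ref{mon} as stated is not the right inequality (it pairs against $(u-v)^q$, not $(u-v)^+$); the paper instead uses the pointwise estimates $|a^+-b^+|^p\le(a-b)^{p-1}(a^+-b^+)$ and $(a-b)^{p-1}\le C_p(a^{p-1}-b^{p-1})$ from \cite[Lemma A.2]{second-eigenvalue} and \cite[Lemma 2.3]{SH}.
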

	\noi Next, we consider the following perturbation of \eqref{mainprob}:		
	\begin{equation}\label{mainprob1}
	\;\;\;	\left.\begin{array}{rl}	
	(-\Delta)_p^su&={ g(x,u)+}\left(\DD\int_\Om\frac{F(y,u)}{|x-y|^{\mu}}dy\right)f(x,u),\hspace{5mm}x\in \Om,\\\\
	u&=0,\hspace{40mm}x\in \RR^N\setminus \Om,
	\end{array}
	\right\}
	\end{equation}
	where $\Om\subset\RR^N$ is a bounded 
	domain with $C^{1,1}$ boundary, $1<p<\infty$ and $0<s<1$ such that $sp<N$. 
	We assume the following hypothesis on the Carath\'eodory function $g:\Om\times\RR\to\RR$:
	\begin{itemize}
		\item[$({\bf H'})$] There exists some constant $C_0>0$ such that  for a.e. $x\in\Om$ 
		and for all $t\in\RR:$ $$|g(x,t)|\leq C_0\left(1+|t|^{q-1}\right),$$ where $1<q\leq p_{s}^*$.
	\end{itemize}
	
	\begin{definition}{\rm(Weak solution of \eqref{mainprob1})}
		$u\in W_0^{s,p}(\Om)$ is said to be weak solution of \eqref{mainprob1}, if for all $w\in W_0^{s,p}(\Om),$ it holds that 
			\begin{align}\label{weak1}
			&\int_{{\RR^N}}\int_{\RR^N}\frac{|
				u(x)-u(y)|^{p-2}(u(x)-u(y))(w(x)-w(y))}{|
				x-y|^{N+sp}}dxdy\n&\qquad=\int_{\Om} g(x,u)w dx+\int_{\Om}\int_{\Om}\frac{F(y,u)f(x,u)}{|x-y|^{\mu}}w(x)~dxdy.
			\end{align}
	\end{definition}
	The weak solution of \eqref{mainprob1} is characterized as the critical point of the 
	% 			\begin{definition}
	associated energy functional $J_1:W_0^{s,p}(\Om)\to\RR,$ defined as follows:
	\begin{align}\label{energy1}
	J_1(u)=\frac{1}{p}\|u\|_{{s,p}}-\int_{\Om}G(x,u)dx-\frac{1}{2}\int_{\Om}\int_{\Om}\frac{F(y,u) F(x,u) }{|x-y|^{\mu}}dxdy,
	\end{align} where $G(x,t):=\int_{0}^tg(x,\tau)dx$ is the primitive of $g$.
	Analogous to \eqref{mainprob}, we have the following result for \eqref{mainprob1}.
	\begin{theorem}\label{pert}
		Let the assumptions in Theorem \ref{reg}  and  {$(\bf H')$} hold.
		Then, there exists  $\al\in(0,s]$ such that any weak solution $u\in W_0^{s,p}(\Om)$ of \eqref{mainprob1}
		belongs to $ L^\infty(\RR^N)\cap C^{0,\al}_d(\ol\Om)$. 
		Also, under the assumptions in Theorem \ref{thm2} and  {$(\bf H')$},
		the assertions of Theorem \ref{thm2} hold for the functional $J_1$.
	\end{theorem}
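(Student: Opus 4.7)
The strategy for Theorem \ref{pert} is to superpose the additional local term $g(x,u)$ onto the arguments already developed for Theorems \ref{reg} and \ref{thm2}, showing that $(\bf H')$ contributes a perturbation of the same order that does not spoil the underlying machinery. The key structural observation is that $g$ enters linearly into the weak formulation \eqref{weak1} and into the energy \eqref{energy1}, so the analysis decouples cleanly from the Choquard part provided one can control the new local term on its own.

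For the $L^\infty$ estimate, I would repeat the boot-strap/Moser iteration used for Theorem \ref{reg}, testing \eqref{weak1} against a truncated power $\varphi$ built from $u_M = \min\{|u|,M\}$ (sign-preserved) with exponent $k(p-1)+1$, $k\geq 1$, and using Lemma \ref{mon} to bound the left-hand side from below. The right-hand side splits into
\begin{equation*}
\int_\Om g(x,u)\,\varphi\, dx \;+\; \int_\Om\!\!\int_\Om \frac{F(y,u)f(x,u)}{|x-y|^\mu}\,\varphi(x)\, dxdy.
\end{equation*}
The second term is handled verbatim as in Theorem \ref{reg} via Hardy--Littlewood--Sobolev. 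For the first, $(\bf H')$ yields $|g(x,u)| \lesssim 1+|u|^{q-1}$. In the sub-critical regime $q<p_s^*$ this feeds directly into the iteration because the exponent gap leaves room to Hölder/Young-absorb the term into $\|u\|_{L^{p_s^*}}$ at each step. In the borderline case $q=p_s^*$ one inserts a preliminary Brezis--Kato-type step: split $|u|^{p_s^*-1}=|u|^{p_s^*-p}|u|^{p-1}$, pick $L$ large so that $\int_{\{|u|>L\}}|u|^{p_s^*}$ is arbitrarily small by absolute continuity, and absorb the high-energy piece into the Sobolev term on the left. This yields $u\in L^{q_0}$ for some $q_0>p_s^*$, after which the iteration proceeds exactly as for Theorem \ref{reg} and terminates with $u\in L^\infty(\Om)$. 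The weighted Hölder estimate then follows from Proposition \ref{regularity} applied to $u$, since the full right-hand side of \eqref{mainprob1} belongs to $L^\infty(\Om)$: the Choquard convolution is bounded by HLS together with $u\in L^\infty$, and the local term is bounded by $(\bf H')$.

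For the Sobolev versus Hölder minimizer equivalence, the proof of Theorem \ref{thm2} adapts with only notational changes. The direction (ii)$\Rightarrow$(i) again uses the compact embedding $C_d^{0,\alpha}(\overline\Om)\hookrightarrow C_d^0(\overline\Om)$ combined with the continuity of $J_1$ along $W_0^{s,p}$-convergent sequences. For the substantive direction (i)$\Rightarrow$(ii), I argue by contradiction: assuming (ii) fails, I produce a sequence $v_n\to w_0$ in $W_0^{s,p}(\Om)$ with $J_1(v_n)<J_1(w_0)$; minimising $J_1$ over the closed ball $\overline B_{\e_n}(w_0)$ with $\e_n=\|v_n-w_0\|_{s,p}\to 0$ gives a constrained minimiser $u_n$ satisfying an Euler--Lagrange equation with source $g(x,u_n)+(\mathcal K_\mu * F(\cdot,u_n))f(\cdot,u_n)$ plus a Lagrange-multiplier contribution. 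Applying the first assertion of Theorem \ref{pert} uniformly along this sequence supplies a uniform $C_d^{0,\alpha}$-bound for $u_n-w_0$, from which compactness delivers $u_n\to w_0$ in $C_d^0(\overline\Om)$; this contradicts (i). The presence of $g(x,u_n)$ is harmless because, by $(\bf H')$ and the uniform $L^\infty$-bound obtained along the iteration, $\|g(\cdot,u_n)\|_{L^\infty(\Om)}$ remains controlled.

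The main obstacle is the critical Sobolev case $q=p_s^*$, where the Moser iteration cannot be started naively and one must calibrate the Brezis--Kato cut-off level $L$ in concert with the HLS estimate for the Choquard term, whose natural scale is the distinct critical exponent $p_{\mu,s}^*$. Aligning these two critical scales simultaneously inside a single iteration — while keeping track of the $p$-Laplacian algebra via Lemma \ref{mon} — is the delicate point; after this first integrability gain, the remainder of the argument is a direct adaptation of the proofs of Theorems \ref{reg} and \ref{thm2}.
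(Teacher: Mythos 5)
Your proposal follows the same high-level route as the paper: re-run the boot-strap for the $L^\infty$-bound with the extra local term controlled by $(\bf H')$ and the Sobolev embedding, pass the resulting $L^\infty$ right-hand side through Proposition \ref{regularity} for the weighted H\"older bound, and then rerun the constrained-minimization argument from Theorem \ref{thm2}. The paper's own proof of Theorem \ref{pert} is essentially a pointer to ``incorporate the obvious modifications,'' so your sketch is in fact more detailed than what is printed.

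Two points deserve care. First, for the constrained minimization you take $\e_n = \|v_n-w_0\|_{s,p}$ and minimize over $\overline B_{\e_n}(w_0)$, which reads as a ball in $W^{s,p}_0(\Om)$. If that is the intended ball, the Lagrange multiplier contributes $m_n(-\Delta)^s_p(u_n-w_0)$, a nonlinear nonlocal term for which the subsequent steps (absorbing the multiplier term in the iteration for the uniform $L^\infty$-bound, and in particular showing that the multiplier part of the right-hand side is bounded in $L^\infty$ so that Proposition \ref{regularity} applies to give a uniform $C^{0,\al}_d$-bound) do not go through cleanly. The paper deliberately minimizes over an $L^{p_s^*}$-ball so that the multiplier term is the zero-order quantity $m_n(u_n-w_0)^{p^*_s-1}$, which vanishes in $L^\infty$ as $u_n\to w_0$. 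You should make the same choice. Second, you note that $g(\cdot,u_n)$ is controlled once the uniform $L^\infty$-bound is in hand, but you should also make explicit that when $q=p^*_s$ the perturbation $g$ must be truncated alongside $f$ (as the paper states): otherwise $J_1$ is not sequentially weakly lower semicontinuous on the constraint set, and the minimizer $u_n$ over the ball need not exist. With the truncated functional, your ``Brezis--Kato calibration'' phrased for the $L^\infty$-estimate is exactly the $\Lambda$-splitting device the paper already uses in Lemma \ref{L-infty}, applied separately at the two critical scales $p^*_s$ and $p^*_{\mu,s}$; once stated this way the two scales decouple into separate H\"older/Young absorptions and there is no genuine alignment difficulty.
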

	\begin{definition}
		Let $u\in  W^{s,p}(\RR^N).$ Then\begin{itemize}
			\item[$(i)$] $u$ is a supersolution of \eqref{mainprob1},if  we have $u\geq0$ a.e. $x\in\Om^c$ and for all $v\in{W}^{s,p}_0(\Om)$ with $v\geq0$ a.e. in $\Om$
			\begin{align*}&\int_{{\RR^N}}\int_{\RR^N}\frac{|
				u(x)-u(y)|^{p-2}(u(x)-u(y))(v(x)-v(y))}{|
				x-y|^{N+sp}}dxdy\\&\qquad\geq\int_{\Om}g(x,u)v(x)dx+\int_{\Om}\int_{\Om}\frac{F(y,u)f(x,u)}{|x-y|^{\mu}}v(x) dx dy;\end{align*}
			\item[$(ii)$] $u$ is a subsolution of \eqref{mainprob1}, if  we have $u\leq0$ a.e. $x\in\Om^c$ and for all $v\in{W}^{s,p}_0(\Om)$ with $v\geq0$ a.e. in $\Om$
		\begin{align*}&\int_{{\RR^N}}\int_{\RR^N}\frac{|
				u(x)-u(y)|^{p-2}(u(x)-u(y))(v(x)-v(y))}{|
				x-y|^{N+sp}}dxdy\\&\qquad\leq\int_{\Om}g(x,u)v(x)dx+\int_{\Om}\int_{\Om}\frac{F(y,u)f(x,u)}{|x-y|^{\mu}}v(x) dx dy.\end{align*}
		\end{itemize}
	\end{definition}
	\begin{theorem}\label{thmch4.1}
		Let the assumptions of Theorem \ref{thmch4} and $(\bf{H}')$ hold.
		Also, let $g(x,\cdot)$ be non decreasing function in $\RR$ for all $x \in \Om$. Suppose $\underline{w}, \overline{w} \in W_0^{s,p}(\Om)$ are a weak subsolution and a
		weak supersolution, respectively to \eqref{mainprob1}, which are not solutions, such that  $\underline w\leq \overline w$. Then, there exists  a solution $ w_0\in W_0^{s,p}(\Om)$ to \eqref{mainprob1} 
		such that $\underline{w} \leq w_0 \leq \overline{w}$ a.e in $\Om$ and $w_0$ is a local minimizer of $J_1$ in $W_0^{s,p}(\Om)$. 
	\end{theorem}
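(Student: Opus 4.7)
The plan is to closely follow the strategy employed for Theorem \ref{thmch4}, incorporating the additional local perturbation $g(x,u)$ and invoking both the regularity and the Sobolev-versus-H\"older minimizer conclusions of Theorem \ref{pert}. First, I introduce the truncation operator $T(x,t) := \max\{\underline{w}(x), \min(t, \overline{w}(x))\}$ and define $\tilde f(x,t) := f(x, T(x,t))$, $\tilde g(x,t) := g(x, T(x,t))$, with primitives $\tilde F$ and $\tilde G$. Since Theorem \ref{pert} supplies an a priori $L^\infty(\Om)$ bound (applied to $\underline w$ and $\overline w$ via the Moser iteration used in the proof of Theorem \ref{reg}), we may take $\underline w, \overline w \in L^\infty(\Om)$; hence $\tilde f$ and $\tilde g$ are uniformly bounded. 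I then consider the truncated functional
\begin{align*}
\tilde J_1(u) = \frac{1}{p}\|u\|_{s,p}^p - \int_\Om \tilde G(x, u)\,dx - \frac{1}{2}\int_\Om\int_\Om \frac{\tilde F(y, u)\tilde F(x, u)}{|x-y|^{\mu}}\,dx\,dy.
\end{align*}
Using the linear growth in $t$ of $\tilde G$ and $\tilde F$, the Hardy-Littlewood-Sobolev inequality \eqref{HLS} and the Sobolev embedding, one checks that $\tilde J_1$ is coercive and weakly sequentially lower semicontinuous on $W_0^{s,p}(\Om)$, and thus attains a global minimizer $w_0 \in W_0^{s,p}(\Om)$, which is a critical point of $\tilde J_1$.

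Next, I show $\underline w \leq w_0 \leq \overline w$ a.e.\ in $\Om$. Testing the Euler-Lagrange equation for $\tilde J_1$ at $w_0$ against $\varphi := (w_0 - \overline w)_+$ and subtracting the super-solution inequality for $\overline w$ against the same $\varphi$ yields
\begin{align*}
\langle (-\Delta)_p^s w_0 - (-\Delta)_p^s \overline w,\, (w_0 - \overline w)_+ \rangle \leq \mathcal{I},
\end{align*}
where $\mathcal I$ collects the differences $\tilde g(x, w_0) - g(x, \overline w)$ and $\tilde F(y, w_0)\tilde f(x, w_0) - F(y, \overline w)f(x, \overline w)$ integrated against $(w_0 - \overline w)_+$. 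On the set $\{w_0 > \overline w\}$, $T(x, w_0) = \overline w(x)$ gives $\tilde g(x, w_0) = g(x, \overline w)$ and $\tilde f(x, w_0) = f(x, \overline w)$; monotonicity of $f$ combined with the pointwise inequality $T(y, w_0) \leq \overline w(y)$ for every $y \in \Om$ yields $\tilde F(y, w_0) \leq F(y, \overline w)$. Therefore $\mathcal I \leq 0$, while the standard monotonicity of $(-\Delta)_p^s$ forces the left-hand side to be non-negative and strictly positive unless $(w_0 - \overline w)_+ \equiv 0$. A symmetric argument against $(\underline w - w_0)_+$ gives $w_0 \geq \underline w$. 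Since the truncation is inactive on $\{\underline w \leq w_0 \leq \overline w\}$, the function $w_0$ is a genuine weak solution of \eqref{mainprob1}.

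Finally, I upgrade $w_0$ to a local minimizer of $J_1$ in the $W_0^{s,p}(\Om)$ topology. By Theorem \ref{pert}, $w_0, \underline w, \overline w \in L^\infty(\Om) \cap C^{0,\al}_d(\overline \Om)$. Applying the strong maximum principle (Lemma \ref{max}) to $\overline w - w_0$ and to $w_0 - \underline w$, each satisfying a nonlocal inequality with non-negative right-hand side (this is precisely where the hypotheses that $\underline w$ and $\overline w$ are not solutions, and the monotonicity of $f$ and $g$, enter), one deduces that $(\overline w - w_0)/d^s$ and $(w_0 - \underline w)/d^s$ are strictly positive on $\overline \Om$. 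Therefore there exists $\varrho > 0$ such that for every $w \in W_0^{s,p}(\Om) \cap C^0_d(\overline \Om)$ with $\|w\|_{C^0_d(\overline \Om)} \leq \varrho$, the function $w_0 + w$ lies pointwise in $[\underline w, \overline w]$ and therefore $J_1(w_0 + w) = \tilde J_1(w_0 + w) \geq \tilde J_1(w_0) = J_1(w_0)$. Hence $w_0$ is a local minimizer of $J_1$ in the $C^0_d$ topology, and the Sobolev-versus-H\"older minimizer part of Theorem \ref{pert} promotes this to the $W_0^{s,p}(\Om)$ topology.

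The hard part, as in Theorem \ref{thmch4}, is the comparison step: the nonlocal Choquard structure couples the behavior of $w_0$ on $\{w_0 > \overline w\}$ with its values everywhere through the Riesz kernel, so to obtain $\mathcal I \leq 0$ one must exploit the pointwise inequality $\tilde F(y, w_0) \leq F(y, \overline w)$ uniformly in $y$ together with the monotonicity of $f$ in a globally consistent way. A secondary delicacy is invoking the strong maximum principle in the presence of the nonlocal Choquard interaction, since $(-\Delta)_p^s$ is nonlinear and the usual linearization is unavailable for $p > 2$; this is handled by recasting the inequality for $\overline w - w_0$ in a weak form to which Lemma \ref{max} directly applies.
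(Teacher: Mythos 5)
Your proposal follows the same broad strategy the paper intends for Theorem~\ref{thmch4.1}: the paper's own proof is a one-line deferral to Theorem~\ref{thmch4}, with the local perturbation $g$ carried along, and your write-up is a reasonable instantiation of that: truncate $f$ and $g$ between $\underline w$ and $\overline w$, solve the truncated problem, prove the comparison $\underline w\le w_0\le\overline w$ by testing against $(w_0-\overline w)^+$ and $(\underline w-w_0)^+$, invoke the strong maximum principle and Hopf's lemma to produce a $C^0_d$-neighborhood of $w_0$ inside the order interval, and upgrade to a $W^{s,p}_0$-local minimizer via the Sobolev-versus-H\"older result (Theorem~\ref{pert}).

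The one genuine methodological divergence is the existence step for the truncated problem. You propose to observe that $\tilde J_1$ is coercive (for $p>2$) and weakly lower semicontinuous, and to take $w_0$ as a global minimizer of $\tilde J_1$ directly. The paper instead builds the Choquard term into a strongly continuous (hence pseudomonotone) operator $T$, verifies that $(-\Delta)_p^s+T$ is pseudomonotone, bounded and coercive, and invokes the surjectivity theorem from \cite{CLM} before asserting that the resulting solution is a global minimizer. Your direct-method route is cleaner, delivers minimality (needed later for the $C^0_d$-local minimizer conclusion) without a separate appeal, and is entirely equivalent in substance; both rely on the same HLS/Sobolev estimates and on $p>2$.

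One step you should justify more carefully, and which in fact tracks a soft spot already present in the paper's proof of Theorem~\ref{thmch4}: you claim that monotonicity of $f$ together with $T(y,w_0)\le\overline w(y)$ gives $\tilde F(y,w_0)\le F(y,\overline w)$ for all $y$, and then that ``therefore $\mathcal I\le 0$.'' Neither implication is automatic. Since $\tilde F(y,\cdot)$ is the primitive of $\tilde f(y,\cdot)=f(y,T(y,\cdot))$, the inequality $\tilde f\le f(\cdot,\overline w)$ on $[\overline w,\infty)$ does not control $\tilde F$ from above on $(-\infty,\underline w)$ (take $f(y,t)=t$, $\underline w=-2$, $\overline w=1$, $t=-3$: then $\tilde F(y,t)=4>F(y,\overline w)=1/2$); and even granted $\tilde F(y,w_0)\le F(y,\overline w)$, concluding $\mathcal I\le 0$ from
\[
\int_\Omega\int_\Omega\frac{[\,\tilde F(y,w_0)-F(y,\overline w)\,]}{|x-y|^\mu}\,f(x,\overline w)\,(w_0-\overline w)^+(x)\,dx\,dy\le 0
\]
requires $f(x,\overline w(x))\ge 0$, which is not among the hypotheses. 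The paper makes the even stronger assertion $\widehat F(y,v_0)=F(y,\overline v)$ at the analogous point in Theorem~\ref{thmch4}, which has the same defect, so you are not introducing a new gap; nevertheless, a sound treatment should either impose a sign condition on $f$ (e.g.\ $f\ge 0$, so $F(y,\cdot)$ is non-decreasing and $T(y,w_0)\le\overline w(y)$ really does give $\tilde F(y,w_0)\le F(y,\overline w)$ when $\tilde F$ is defined as $F(y,T(y,\cdot))$), or redefine the truncation so that the Choquard factor at the level of $F$ is clamped pointwise rather than being the primitive of $\tilde f$, and then re-derive the Euler--Lagrange equation accordingly. Apart from this point, which you should address explicitly rather than gloss over, the proposal is aligned with the paper.
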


\section{Proofs of the main results}\label{pmr} In this section, we consider $C$ to be a generic positive constant which may vary from line to line.
In order to prove the main theorems of this article, we recall  some useful  inequalities.
\begin{lemma}{\rm \cite[Lemma C.2]{cheeger}}\label{lem1}
	Let $1<p<\infty$ and $\ba\geq1.$	For every $a,b.m\geq0$ there holds  $$|a-b|^{p-2}(a-b)(a_m^{\ba}-b_m^{\ba})
	\geq \frac{{\ba}p^p}{({\ba}+p-1)^p}\left|a_m^{\frac{{\ba}+p-1}{p}}-b_m^{\frac{{\ba}+p-1}{p}}\right|^p,$$
	where we set $a_m=\min\{a,m\}$ and $b_m=\min\{b,m\}.$
\end{lemma}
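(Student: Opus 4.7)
The statement is a purely pointwise algebraic inequality, so the proof will be elementary. By the symmetry of both sides under swapping $a$ and $b$, I may assume $a\geq b\geq 0$, which makes $|a-b|^{p-2}(a-b)=(a-b)^{p-1}$ and $a_m\geq b_m$. I would then split into three cases according to the positions of $a,b$ relative to $m$: (i) $a,b\leq m$; (ii) $b\leq m\leq a$; (iii) $m\leq b\leq a$. Case (iii) is immediate because the right-hand side vanishes while the left-hand side is nonnegative, so the work reduces to cases (i) and (ii).

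The core is case (i), where $a_m=a$ and $b_m=b$ and one must show
\[
(a-b)^{p-1}(a^{\beta}-b^{\beta})\;\geq\;\frac{\beta p^{p}}{(\beta+p-1)^{p}}\,\bigl(a^{\frac{\beta+p-1}{p}}-b^{\frac{\beta+p-1}{p}}\bigr)^{p}.
\]
The plan is to write the right-hand factor as an integral,
\[
a^{\frac{\beta+p-1}{p}}-b^{\frac{\beta+p-1}{p}}=\frac{\beta+p-1}{p}\int_{b}^{a} t^{\frac{\beta-1}{p}}\,dt,
\]
and then apply H\"older's inequality with conjugate exponents $p$ and $p/(p-1)$ to split $t^{(\beta-1)/p}=t^{(\beta-1)/p}\cdot 1$:
\[
\int_{b}^{a} t^{\frac{\beta-1}{p}}\,dt\;\leq\;\Bigl(\int_{b}^{a} t^{\beta-1}\,dt\Bigr)^{1/p}(a-b)^{(p-1)/p}=\Bigl(\frac{a^{\beta}-b^{\beta}}{\beta}\Bigr)^{1/p}(a-b)^{(p-1)/p}.
\]
Raising to the $p$-th power and rearranging gives exactly the claimed inequality in case (i).

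For case (ii), where $a_m=m$ and $b_m=b$, I would use the monotonicity $(a-b)^{p-1}\geq (m-b)^{p-1}$ (since $a\geq m$) to bound
\[
(a-b)^{p-1}\bigl(m^{\beta}-b^{\beta}\bigr)\;\geq\;(m-b)^{p-1}\bigl(m^{\beta}-b^{\beta}\bigr),
\]
and then apply the inequality established in case (i) to the pair $(m,b)$, which lies entirely below the truncation level. Combining the three cases establishes the lemma. The only genuinely substantive step is the H\"older estimate used in case (i); the rest is bookkeeping of the truncation, so no serious obstacle is expected.
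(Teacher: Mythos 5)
Your proof is correct: the reduction by symmetry to $a\geq b$, the disposal of the trivial case $m\leq b\leq a$ (both sides vanish), the integral representation plus H\"older with exponents $p$ and $p/(p-1)$ in the untruncated case, and the monotonicity step $(a-b)^{p-1}\geq (m-b)^{p-1}$ that reduces the case $b\leq m\leq a$ to the previous one (multiplying by the nonnegative factor $m^{\beta}-b^{\beta}$) all check out, and the constant $\beta p^{p}/(\beta+p-1)^{p}$ comes out exactly right. The paper itself offers no proof of this lemma --- it is quoted verbatim from the cited reference \cite{cheeger} --- and your argument is essentially the standard one given there, so there is nothing further to reconcile.
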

\begin{lemma}{\rm\cite [Lemma A.1] {second-eigenvalue}}\label{lem2}
	Let $1<p<\infty$ and $f:\RR\to\RR$ be a  convex function. 
	\begin{align*}
	&|a - b|^{p-2} (a - b)\left[
	A~|f'(a)|^{p-2} f'(a) - B ~|f'(b)|^{p-2} f'(b)\right]\n&\qquad\geq |f(a) - f(b)|^{p-2} (f(a)-f(b)) (A - B),
	\end{align*}
	for every $a, b \in \RR$ and every $A,B \geq 0.$
\end{lemma}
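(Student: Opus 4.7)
The plan is to rewrite the inequality in terms of the monotone map $\phi(t) := |t|^{p-2}t$ and exploit two of its features: $\phi$ is strictly increasing on $\RR$ (for $p>1$), and it is multiplicative in the sense that $\phi(xy) = \phi(x)\phi(y)$, which follows from $|xy|^{p-2} = |x|^{p-2}|y|^{p-2}$ and a consistency check across signs. With this notation, the inequality to prove reads
$$\phi(a-b)\bigl[A\,\phi(f'(a)) - B\,\phi(f'(b))\bigr] \;\geq\; \phi(f(a)-f(b))\,(A-B).$$

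First I would use convexity of $f$ twice, applying the subgradient inequality at $b$ and at $a$:
$$f'(b)(a-b) \;\leq\; f(a) - f(b) \;\leq\; f'(a)(a-b).$$
Since $\phi$ is increasing, applying it term by term and using multiplicativity to rewrite $\phi(f'(b)(a-b)) = \phi(a-b)\phi(f'(b))$ and similarly on the right yields the key two-sided bound
$$\phi(a-b)\,\phi(f'(b)) \;\leq\; \phi(f(a)-f(b)) \;\leq\; \phi(a-b)\,\phi(f'(a)).$$

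The conclusion is then a one-line algebraic rearrangement. Writing $(A-B)\phi(f(a)-f(b)) = A\,\phi(f(a)-f(b)) - B\,\phi(f(a)-f(b))$ and grouping the $A$-terms and $B$-terms separately, the difference between the left-hand side and the right-hand side of the target inequality equals exactly
$$A\,\bigl[\phi(a-b)\phi(f'(a)) - \phi(f(a)-f(b))\bigr] \;+\; B\,\bigl[\phi(f(a)-f(b)) - \phi(a-b)\phi(f'(b))\bigr].$$
Both bracketed quantities are non-negative by the two-sided bound above, and $A, B \geq 0$ by hypothesis, so the sum is non-negative.

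The attractive aspect of this route is that no case analysis on $A$ versus $B$ (or on the sign of $a-b$) is needed; the decomposition as a non-negatively weighted sum of two individually non-negative quantities handles every sign configuration simultaneously. The only genuine ingredient beyond elementary algebra is the multiplicative identity $\phi(xy) = \phi(x)\phi(y)$, and I expect the sole source of friction to be the bookkeeping required to verify it across the four sign combinations of $x$ and $y$ before invoking monotonicity of $\phi$ on the convexity inequalities.
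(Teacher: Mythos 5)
Your proof is correct. The paper states this lemma without proof, simply citing Brasco--Parini \cite[Lemma A.1]{second-eigenvalue}, and your argument is essentially the standard one from that reference: the two-sided convexity bound $f'(b)(a-b)\le f(a)-f(b)\le f'(a)(a-b)$, pushed through the increasing, odd, multiplicative map $t\mapsto|t|^{p-2}t$ (multiplicativity needs no sign case analysis, since $|xy|=|x|\,|y|$ and the sign factors match automatically), followed by the non-negatively weighted decomposition in $A$ and $B$.
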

\noi In the next lemma, following the approach as in (\cite{second-eigenvalue}, Theorem 3.1), we derive $a~priori$ bound on the weak solution of \eqref{mainprob}. 
\begin{lemma}{\rm(Global $L^\infty$- bound)}\label{L-infty} Let the assumptions in Theorem \ref{reg} hold.
	Then  any weak solution $u\in W_0^{s,p}(\Om)$ of \eqref{mainprob} belongs to  $ L^\infty(\RR^N).$
%	Moreover, there exist two positive constants $C_*(s,p,\mu,N)$ and $ C^*(s,p,\mu,N)$  such that
%	\[
%	\|u\|_{L^{\infty}(\RR^N)}\leq \left(C_*\right)^{\DD\frac{1}{p_{\mu,s}^*-p}}\left(C^*\right)^{\DD\frac{p-1}{\sqrt {p}(\sqrt{p_{\mu,s}^*}-\sqrt {p})}}\|u\|_{L^{  p_s^*}(\Om)}.
%	\]
\end{lemma}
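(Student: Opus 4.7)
The proof is a Moser-type boot-strap adapted to the doubly-nonlocal structure, with the Choquard term handled via the Hardy--Littlewood--Sobolev (HLS) inequality. Since the positive and negative parts of $u$ inherit analogous inequalities from the pointwise structure of $(-\Delta)_p^s$, I may assume $u \geq 0$. For $m > 0$ and a Moser parameter $\beta \geq 1$, I would test the weak formulation \eqref{weak} with a power of the truncation $u_m := \min\{u, m\}$, concretely $w = u_m^{p(\beta - 1) + 1} \in W_0^{s,p}(\Om) \cap L^\infty(\Om)$. Applying Lemma~\ref{lem1} (with $b = 0$) together with the fractional Sobolev embedding $W_0^{s,p}(\Om) \hookrightarrow L^{p_s^*}(\Om)$ yields
$$C(\beta)\,\bigl\|u_m\bigr\|_{L^{\beta p_s^*}(\Om)}^{\beta p} \;\leq\; \int_\Om\!\int_\Om \frac{F(y,u)\, f(x,u)\, u_m^{p(\beta - 1) + 1}(x)}{|x - y|^\mu}\,dx\,dy,$$
with $C(\beta)$ depending polynomially on $\beta$ (its explicit form comes from Lemma~\ref{lem1}).

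To estimate the right-hand side, I would apply \eqref{HLS} with symmetric exponents $q_1 = q_2 = \frac{2N}{2N - \mu}$, for which $1/q_1 + \mu/N + 1/q_2 = 2$. The critical identity $r q_2 = p_s^*$ at $r = p_{\mu,s}^*$ combined with $(\mathbf{H})$ and $u \in L^{p_s^*}(\Om)$ gives $\|F(\cdot, u)\|_{q_2} \leq M$ uniformly. The delicate factor is $\|f(\cdot, u)\, u_m^{p(\beta - 1) + 1}\|_{q_1}$, whose critical contribution is $\int |u|^{(r - 1)q_1}\, u_m^{[p(\beta - 1) + 1]q_1}\,dx$. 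Because $(r - 1)q_1 + q_1 = p_s^*$, this integral scales exactly like the Sobolev side and a direct H\"older does not close the iteration. I would split $\int = \int_{\{u \leq K\}} + \int_{\{u > K\}}$: the first piece is dominated by $C(K)\int u_m^{[p(\beta - 1) + 1]q_1}$, while H\"older on the second piece produces the small prefactor $\bigl(\int_{\{u > K\}} u^{p_s^*}\bigr)^{(r - 1)q_1/p_s^*}$ times a power of $\|u_m\|_{L^{\beta p_s^*}}$; by absolute continuity of the $L^{p_s^*}$-integral, one chooses $K = K(u)$ large so that this prefactor is smaller than any prescribed $\varepsilon > 0$, and the resulting term is absorbed into the left-hand side.

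Sending $m \to \infty$ via Fatou's lemma yields a Moser-type inequality of the form $\|u\|_{L^{\beta p_s^*}(\Om)}^{\beta p} \leq \tilde C(\beta)\bigl(1 + \|u\|_{L^{[p(\beta - 1) + 1]q_1}(\Om)}^{\text{lower power}}\bigr)$, where the ``lower'' exponent on the right is strictly smaller than $\beta p_s^*$ precisely because of the standing hypothesis $\mu < 2sp$. Starting from $u \in L^{p_s^*}(\Om)$ and iterating, the exponents grow geometrically with ratio $\chi := p_s^*/(p q_1) = (2N - \mu)/\bigl(2(N - ps)\bigr) > 1$ (which is the quantitative form of $\mu < 2sp$), while the constants $\tilde C(\beta_n)$ grow polynomially. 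A standard Moser summation argument then yields $\|u\|_{L^\infty(\Om)} < \infty$, and since $u \equiv 0$ on $\RR^N \setminus \Om$, we conclude $u \in L^\infty(\RR^N)$.

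The chief obstacle is the critical Choquard exponent $r = p_{\mu, s}^*$: the subcritical case closes trivially by H\"older, but at the critical level the matching $(r - 1)q_1 + q_1 = p_s^*$ makes the HLS contribution scale identically with the leading Sobolev term, and only the level-set absorption trick (a fractional--nonlocal counterpart of the Brezis--Kato argument) permits the iteration to close. A secondary technical point is the polynomial tracking of $\tilde C(\beta_n)$ along the iteration, handled by the standard bookkeeping as in \cite{second-eigenvalue}.
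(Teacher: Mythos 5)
Your proposal follows essentially the same Moser-type boot-strap as the paper: test the weak formulation with powers of the truncation $u_l=\min\{l,|u|\}$, apply Lemma~\ref{lem1} and the Sobolev embedding, estimate the Choquard term by HLS with $q_1=q_2=\tfrac{2N}{2N-\mu}$, split over the level sets $\{|u|<\Lambda\}$ and $\{|u|\geq\Lambda\}$ so the critical contribution acquires a small prefactor that can be absorbed, and iterate with ratio $p_{\mu,s}^*/p$ (which equals your $\chi=\tfrac{2N-\mu}{2(N-sp)}>1$ precisely because $\mu<2sp$).

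One step of your sketch is not sound as written: because $(-\Delta)_p^s$ is a \emph{nonlinear} operator, you cannot reduce to $u\geq 0$ by arguing that ``the positive and negative parts of $u$ inherit analogous inequalities''; a sign decomposition of the solution does not pass through $(-\Delta)_p^s$. The paper's rigorous version of your intended reduction is different: test with $\psi\,|h_{\epsilon}'(u)|^{p-2}h_{\epsilon}'(u)$, where $h_{\epsilon}(t)=(\epsilon^2+t^2)^{1/2}$ is a smooth convex approximant of $|t|$, invoke the discrete convexity inequality of Lemma~\ref{lem2} to obtain
$\langle(-\Delta)_p^s h_{\epsilon}(u),\psi\rangle \leq \int_\Om\int_\Om\tfrac{|F(y,u)|\,|f(x,u)|}{|x-y|^\mu}\psi(x)\,dx\,dy$ (using $|h_{\epsilon}'|\leq 1$), and pass $\epsilon\to 0^+$ by Fatou to get the weak inequality for $|u|$. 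With that substitution the remainder of your argument matches the paper's proof.
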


\begin{proof}  From the given assumption on $\mu$, we have $p_{\mu,s}^*>p.$ We take  $\phi=\psi|h_\e'(u)|^{p-2}h_\e'(u)$ as the test function
	in \eqref{weak}, where $\psi\in C_c^\infty(\Om), \psi>0$ and 
	for every $0 < \e <<1, $ we define the smooth convex Lipschitz function $$h_\e(t)=(\e^2+t^2)^{\frac{1}{2}}.$$ In addition, by choosing
	$$a = u(x),~ b = u(y),~ A = \psi(x) \text{~~ and~~} B = \psi(y)$$ in Lemma \ref{lem2}, we obtain
	\begin{align}\label{1}
	&\int_{{\RR^N}}\int_{\RR^N}\frac{|
		h_\e(u(x))-h_\e(u(y))|^{p-2}(h_\e(u(x))-h_\e(u(y)))(\psi(x)-\psi(y))}{|
		x-y|^{N+sp}}dxdy\n&~~~~~~~~~~~~~~~~~~~~~~~~\leq\int_{\Om}\int_{\Om}\frac{|F(y,u)|~|f(x,u)|}{|x-y|^{\mu}}~ |h_\e'(u(x))|^{p-1}\psi(x)\;dxdy.
	\end{align}
	Since $h_\e(t)$ converges to $h(t):=|t|$ as $\e\to 0^+$ and $|h_\e'(t)|\leq 1$, passing to the limit and using Fatou's lemma in \eqref{1}, we get
	\begin{align}\label{2}
	\int_{{\RR^N}}\int_{\RR^N}\frac{\Big|
		|u(x)|-|u(y)|\Big|^{p-2}\Big(|u(x)|-|u(y)|\Big)\Big(\psi(x)-\psi(y)\Big)}{|
		x-y|^{N+sp}}dxdy\leq\int_{\Om}\int_{\Om}\frac{|F(y,u)|\;|f(x,u)|}{|x-y|^{\mu}}\psi(x)\;dxdy
	\end{align} for every positive $\psi\in C_c^\infty(\Om).$ By density, \eqref{2} holds true for $0\leq\psi\in W_0^{s,p}(\Om).$ 
	Next, we define $$u_l=\min\{l,|u(x)|\}.$$ Clearly $u_l\in W_0^{s,p}(\Om).$ For $k\geq1,$ let us set
	$$\beta:=kp-p+1.$$ So $\beta>1.$ In \eqref{2}, choosing $\psi=u_l^{\ba },$ and using Lemma \ref{lem1}, we obtain
	\begin{align}\label{3}
	\frac{\beta p^p}{(\ba+p-1)^p}\int_{\RR^N}\int_{\RR^N}\frac{\left|(u_l(x))^{\frac{\ba+p-1}{p}}-(u_l(y))^{\frac{\ba+p-1}{p}}\right|^p}{|x-y|^{N+sp}}dxdy\leq\int_{\Om}\int_{\Om}\frac{|F(y,u)|\;|f(x,u)|}{|x-y|^{\mu}}(u_l(x))^{\ba}\;dxdy.
	\end{align}
	By observing  that   $$\frac{1}{\beta}\left({\frac{\ba+p-1}{p}}\right)^p\leq \left({\frac{\ba+p-1}{p}}\right)^{p-1}, 
	\text{~~for large~} \ba$$ and using the continuous embedding $W_0^{s,p}(\Om)\hookrightarrow L^{p_s^*}(\Om)$, from \eqref{3} we get
	\begin{align}\label{4}
	\left\|u_l^{k}\right\|^p_{L^{p_s^*}(\Om)}\leq \frac {(k)^{p-1}}{ S_s^p}\int_{\Om}\int_{\Om}\frac{|F(y,u)|\;|f(x,u)|}{|x-y|^{\mu}}(u_l(x))^{\ba}\;dxdy,
	\end{align}
	where we have used the relation $k=\frac{\ba+p-1}{p}$ and $S_s$ is as defined in \eqref{ch27}.
	Now we will estimate the right-hand side  of \eqref{4}. Using $(\bf H),$  Hardy-Littlewood-Sobolev inequality and the fact $u_l\leq |u|$ and making use of the inequalities $ (x_1+x_2)^{\gamma}\leq x_1^{\gamma}+x_2^{\gamma},~0<\gamma<1, ~x_1,x_2\geq 0,$ and $ (x_1+x_2)^{\gamma}\leq 2^{\gamma-1} (x_1^{\gamma}+x_2^{\gamma}),~\gamma>1,~ x_1,x_2\geq 0,$ we deduce
		\begin{align}\label{5}
	&\int_{\Om}\int_{\Om}\frac{|F(y,u)|\;|f(x,u)|}{|x-y|^{\mu}}(u_l(x))^{\ba}~dxdy\n
	&\leq C\|F(\cdot,u(\cdot))\|_{L^{\frac{2N}{2N-\mu}}(\Om)}\left(\int_\Om\left(|f(x,u)|~|u_l(x)|^\beta\right)^{\frac{2N}{2N-\mu}}\right)^{\frac{2N-\mu}{2N}}\n
	&\leq C\left(\|u\|_{L^{\frac{2N}{2N-\mu}}(\Om)}+\left\||u|^{p_{\mu,s}^*}\right\|_{L^{{\frac{2N}{2N-\mu}}}(\Om)}\right)\;\left(\int_\Om|u_l|^{\frac{2N\beta}{2N-\mu}} dx+\int_\Om\left(|u|^{p_{\mu,s}^*-2} |u\;u_l^{\ba}|\right)^{\frac{2N}{2N-\mu}}dx\right)^{\frac{2N-\mu}{2N}}\n
	&=C\left(\|u\|_{L^{\frac{p_s^*}{p_{\mu,s}^*}}(\Om)}+\left\|u\right\|_{L^{{{p_s^*}}}(\Om)}^{p_{\mu,s}^*}\right)\Bigg[\int_{\Om\cap\{|u|<\Lambda\}}|u_l|^{\beta\frac{p_s^*}{p_{\mu,s}^*}} dx+\int_{\Om\cap\{|u|\geq\Lambda\}}|u_l|^{\beta\frac{p_s^*}{p_{\mu,s}^*}} dx\n&\qquad\qquad\qquad+\int_{\Om\cap\{|u|<\Lambda\}}\left(|u|^{p_{\mu,s}^*-2} |u\;u_l^{\ba}| \right)^{\frac{p_s^*}{p_{\mu,s}^*}}dx+\int_{\Om\cap\{|u|\geq\Lambda\}}\left(|u|^{p_{\mu,s}^*-2} |u\;u_l^{\ba}|\right)^{\frac{p_s^*}{p_{\mu,s}^*}}dx
	\Bigg]^{\frac{p_{\mu,s}^*}{p_s^*}}\n
	&\leq\tilde{ C}\Bigg[\left(\int_{\Om\cap\{|u|<\Lambda\}}|u|^{\beta\frac{p_s^*}{p_{\mu,s}^*}}dx\right)^{\frac{p_{\mu,s}^*}{p_s^*}}+\left(\int_{\Om\cap\{|u|\geq\Lambda\}}\left(|u|^{p_{\mu,s}^*+\beta-1}\right)^{\frac{p_s^*}{p_{\mu,s}^*}}dx\right)^{\frac{p_{\mu,s}^*}{p_s^*}}\n
	&
	\qquad+\left(\int_{\Om\cap\{|u|<\Lambda\}}\left(|u|^{p_{\mu,s}^*+\beta-1} \right)^{\frac{p_s^*}{p_{\mu,s}^*}}dx\right)^{\frac{p_{\mu,s}^*}{p_s^*}}+\left(\int_{\Om\cap\{|u|\geq\Lambda\}}\left(|u|^{p_{\mu,s}^*+\beta-1} \right)^{\frac{p_s^*}{p_{\mu,s}^*}}dx\right)^{\frac{p_{\mu,s}^*}{p_s^*}}\Bigg]\n
	&\leq\tilde{ C}\Bigg[|\Om|^{\frac{p_{\mu,s}^*-1}{p_{\mu,s}^*+\beta-1}.\frac{p_{\mu,s}^*}{p_s^*}}\left\{\left(\int_{\Om\cap\{|u|<\Lambda\}}|u|^{\frac{p_s^*}{p_{\mu,s}^*}{(p_{\mu,s}^*+\beta-1)}}dx\right)^{\frac{\beta}{p_{\mu,s}^*+\beta-1}}\right\}^{\frac{p_{\mu,s}^*}{p_s^*}}\n
	&
	\qquad+\left(\int_{\Om\cap\{|u|<\Lambda\}}\left(|u|^{p_{\mu,s}^*+\beta-1} \right)^{\frac{p_s^*}{p_{\mu,s}^*}}dx\right)^{\frac{p_{\mu,s}^*}{p_s^*}}+2\left(\int_{\Om\cap\{|u|\geq\Lambda\}}\left(|u|^{p_{\mu,s}^*-p}\; |u|^{p+\beta-1}\right)^{\frac{p_s^*}{p_{\mu,s}^*}}dx\right)^{\frac{p_{\mu,s}^*}{p_s^*}}\Bigg]
	\end{align}
	where $\Lambda>1$ will be chosen later and $\tilde{C}=C\left(\|u\|_{L^{\frac{p_s^*}{p_{\mu,s}^*}}(\Om)}+\left\|u\right\|_{L^{{{p_s^*}}}(\Om)}^{p_{\mu,s}^*}\right).$
	 Next we proceed by  adapting the idea of the proof of \cite[ Theorem 4.1]{pat}. Now two cases arise for the first integration expression in the right-hand side of \eqref{5}.
	{\begin{align*}
		&\text{Either  } \left(\int_{\Om\cap\{|u|<\Lambda\}}|u|^{\frac{p_s^*}{p_{\mu,s}^*}{(p_{\mu,s}^*+\beta-1)}}dx\right)^{\frac{\beta}{p_{\mu,s}^*+\beta-1}}\leq 1 \text{  or  }>1, \text{\;\; that is,}\n
		&\text{ either  }\left(\int_{\Om\cap\{|u|<\Lambda\}}|u|^{\frac{p_s^*}{p_{\mu,s}^*}{(p_{\mu,s}^*+\beta-1)}}dx\right)^{\frac{\beta}{p_{\mu,s}^*+\beta-1}}\leq 1\\&\qquad\qquad\qquad\text{  or  }\\ &\left(\int_{\Om\cap\{|u|<\Lambda\}}|u|^{\frac{p_s^*}{p_{\mu,s}^*}{(p_{\mu,s}^*+\beta-1)}}dx\right)^{\frac{\beta}{p_{\mu,s}^*+\beta-1}}<\int_{\Om\cap\{|u|<\Lambda\}}|u|^{\frac{p_s^*}{p_{\mu,s}^*}{(p_{\mu,s}^*+\beta-1)}}dx.
		\end{align*}}
	The above implies 
	\begin{align}\label{n1}
	\left(\int_{\Om\cap\{|u|<\Lambda\}}|u|^{\frac{p_s^*}{p_{\mu,s}^*}(p_{\mu,s}^*+\beta-1)}dx\right)^{\frac{\beta}{p_{\mu,s}^*+\beta-1}}\leq	1+\int_{\Om\cap\{|u|<\Lambda\}}\left(|u|^{p_{\mu,s}^*+\beta-1}\right)^{\frac{p_s^*}{p_{\mu,s}^*}}dx.
	\end{align}
	Plugging \eqref{n1} into \eqref{5}, we have 
	{	\begin{align}\label{n2}
		&\int_{\Om}\int_{\Om}\frac{|F(y,u)|\;|f(x,u)|}{|x-y|^{\mu}}(u_l(x))^{\ba}~dxdy\n		&\leq \tilde{ C}\Bigg[C'\left(1+\int_{\Om\cap\{|u|<\Lambda\}}\left(|u|^{p_{\mu,s}^*+\beta-1}\right)^{\frac{p_s^*}{p_{\mu,s}^*}}dx\right)^{\frac{p_{\mu,s}^*}{p_s^*}}+\Lambda^{p_{\mu,s}^*-p}\left(\int_{\Om\cap\{|u|<\Lambda\}} \left(|u|^{p+\beta-1}\right)^{\frac{p_s^*}{p_{\mu,s}^*}}dx\right)^{\frac{p_{\mu,s}^*}{p_s^*}}\n&\qquad\qquad+2\left(\int_{\Om\cap\{|u|\geq\Lambda\}}\left(|u|^{p_{\mu,s}^*-p}\; |u|^{p+\beta-1}\right)^{\frac{p_s^*}{p_{\mu,s}^*}}dx\right)^{\frac{p_{\mu,s}^*}{p_s^*}}\Bigg]\n&= \tilde{ C}\Bigg[C'\left(1+\int_{\Om\cap\{|u|<1\}}\left(|u|^{p_{\mu,s}^*+\beta-1}\right)^{\frac{p_s^*}{p_{\mu,s}^*}}dx+\int_{\Om\cap\{1<|u|<\Lambda\}}\left(|u|^{p_{\mu,s}^*+\beta-1}\right)^{\frac{p_s^*}{p_{\mu,s}^*}}dx\right)^{\frac{p_{\mu,s}^*}{p_s^*}}+\Lambda^{p_{\mu,s}^*-p}\;\|u\|_{L^{\frac{kpp_s^*}{p_{\mu,s}^*}}(\Om)}^{kp}\n&\qquad\qquad+2\left(\int_{\Om\cap\{|u|\geq\Lambda\}}\left(|u|^{p_{\mu,s}^*-p}\; |u|^{kp}\right)^{\frac{p_s^*}{p_{\mu,s}^*}}dx\right)^{\frac{p_{\mu,s}^*}{p_s^*}}\Bigg]\n
		&\leq \tilde{ C}\Bigg[C'\left\{1+\left(\int_{\Om\cap\{|u|<1\}}\left(|u|^{p_{\mu,s}^*+\beta-1}\right)^{\frac{p_s^*}{p_{\mu,s}^*}}dx\right)^{\frac{p_{\mu,s}^*}{p_s^*}}+\left(\int_{\Om\cap\{1<|u|<\Lambda\}}\left(|u|^{p_{\mu,s}^*+\beta-1}\right)^{\frac{p_s^*}{p_{\mu,s}^*}}dx\right)^{\frac{p_{\mu,s}^*}{p_s^*}}\right\}\n&\qquad\qquad+\Lambda^{p_{\mu,s}^*-p}\;\|u\|_{L^{\frac{kpp_s^*}{p_{\mu,s}^*}}(\Om)}^{kp}+2\left(\int_{\Om\cap\{|u|\geq\Lambda\}}\left(|u|^{p_{\mu,s}^*-p}\; |u|^{kp}\right)^{\frac{p_s^*}{p_{\mu,s}^*}}dx\right)^{\frac{p_{\mu,s}^*}{p_s^*}}\Bigg]\n
		&\leq \tilde{ C}\Bigg[C'\left\{1+\left(\int_{\Om\cap\{|u|<1\}}\left(|u|^{p+\beta-1}\right)^{\frac{p_s^*}{p_{\mu,s}^*}}dx\right)^{\frac{p_{\mu,s}^*}{p_s^*}}+\Lambda^{p_{\mu,s}^*-p}\;\|u\|_{L^{\frac{kpp_s^*}{p_{\mu,s}^*}}(\Om)}^{kp}\right\}+\Lambda^{p_{\mu,s}^*-p}\;\|u\|_{L^{\frac{kpp_s^*}{p_{\mu,s}^*}}(\Om)}^{kp}\n&\qquad\qquad+2\left(\int_{\Om\cap\{|u|\geq\Lambda\}}\left(|u|^{p_{\mu,s}^*-p}\; |u|^{kp}\right)^{\frac{p_s^*}{p_{\mu,s}^*}}dx\right)^{\frac{p_{\mu,s}^*}{p_s^*}}\Bigg]\n	&= \tilde{ C}\Bigg[C'\left\{1+\|u\|_{L^{\frac{kpp_s^*}{p_{\mu,s}^*}}(\Om)}^{kp}+\Lambda^{p_{\mu,s}^*-p}\;\|u\|_{L^{\frac{kpp_s^*}{p_{\mu,s}^*}}(\Om)}^{kp}\right\}+\Lambda^{p_{\mu,s}^*-p}\;\|u\|_{L^{\frac{kpp_s^*}{p_{\mu,s}^*}}(\Om)}^{kp}\n&\qquad\qquad\qquad+2\left(\int_{\Om\cap\{|u|\geq\Lambda\}}\left(|u|^{p_{\mu,s}^*-p}\; |u|^{kp}\right)^{\frac{p_s^*}{p_{\mu,s}^*}}dx\right)^{\frac{p_{\mu,s}^*}{p_s^*}}\Bigg]\n
		&\leq C''\Bigg[1+\Lambda^{p_{\mu,s}^*-p}\;\|u\|_{L^{\frac{kpp_s^*}{p_{\mu,s}^*}}(\Om)}^{kp}+\left(\int_{\Om\cap\{|u|\geq\Lambda\}}\left(|u|^{p_{\mu,s}^*-p}\; |u|^{kp}\right)^{\frac{p_s^*}{p_{\mu,s}^*}}dx\right)^{\frac{p_{\mu,s}^*}{p_s^*}}\Bigg],
		\end{align}} where $C',\, C''>1$ are some positive constants that do not depend on $k,\beta.$
	Again by plugging  \eqref{n2} into \eqref{4} and applying Fatou's lemma, we get
	\begin{align}\label{6}
	\|u\|_{L^{k p_s^*}(\Om)}^{k p}
	\leq C'' \frac {k^{p-1}}{ (S_s)^p}\Bigg[1+ \Lambda^{p_{\mu,s}^*-p}\;\|u\|_{L^{\frac{kpp_s^*}{p_{\mu,s}^*}}(\Om)}^{kp}+\left(\int_{\Om\cap\{|u|\geq\Lambda\}}\left(|u|^{p_{\mu,s}^*-p}\; |u|^{kp}\right)^{\frac{p_s^*}{p_{\mu,s}^*}}dx\right)^{\frac{p_{\mu,s}^*}{p_s^*}}\Bigg].
	\end{align}
	Now using H\"older inequality, we obtain
	\begin{align}\label{7}
	&\left(\int_{\Om\cap\{|u|\geq\Lambda\}}\left(|u|^{p_{\mu,s}^*-p} \left(|u|^{kp}\right)\right)^{\frac{p_s^*}{p_{\mu,s}^*}}dx\right)^{\frac{p_{\mu,s}^*}{p_s^*}}\n
	%				 &=\left(\int_{\Om\cap\{|u|\geq\Lambda\}}\left(|u|^{p_{\mu,s}^*-p} |u|^{k p}\right)^{\frac{p_s^*}{p_{\mu,s}^*}}dx\right)^{\frac{p_{\mu,s}^*}{p_s^*}}\n
	&\leq C \left(\int_{\Om\cap\{|u|\geq\Lambda\}}\Big(|u|^{(p_{\mu,s}^*-p)\frac{p_s^*}{p_{\mu,s}^*}} \Big)^{\frac{p_{\mu,s}^*}{p_{\mu,s}^*-p}}dx\right)^{\frac{p_{\mu,s}^*}{p_s^*}.\frac{p_{\mu,s}^*-p}{p_{\mu,s}^*}}\left(\int_{\Om\cap\{|u|\geq\Lambda\}}\Big(|u|^{k p.\frac{p_s^*}{p_{\mu,s}^*}} \Big)^{\frac{p_{\mu,s}^*}{p}}dx\right)^{\frac{p_{\mu,s}^*}{p_s^*}.\frac{p}{p_{\mu,s}^*}}\n
	&\leq C\left(\int_{\Om\cap\{|u|\geq\Lambda\}}|u|^{p_s^*} dx\right)^{\frac{p_{\mu,s}^*-p}{p_s^*}}\left(\int_{\Om}|u|^{k p_s^*} dx\right)^{\frac{p}{p_s^*}}=C(\Lambda)
	\|u\|_{L^{k p_s^*}(\Om)}^{k p}	\end{align}
	Combining \eqref{6} and \eqref{7}, we have
	\begin{align}\label{8}
	\|u\|_{L^{k p_s^*}(\Om)}^{k p}\leq {C''} \frac {k^{p-1}}{ (S_s)^p}\left[1+\Lambda^{p_{\mu,s}^*-p} \|u\|_{L^{\frac{k pp_s^*}{p_{\mu,s}^*}}(\Om)}^{kp}
	+ C(\Lambda)\|u\|_{L^{k p_s^*}(\Om)}^{k p}
	\right].
	\end{align} 
	Now  by Lebesgue dominated convergence theorem in \eqref{7}, we  choose $\Lambda>1$ large enough so that $C(\Lambda)$ is appropriately small and consequently $\DD C(\Lambda)< \frac{ (S_s)^p} {2{C''}(k)^{p-1}}.$
	Therefore, by employing the last inequality in \eqref{8}, it follows that
	\begin{align}\label{9.0}
	\|u\|_{L^{k p_s^*}(\Om)}\leq \left(\hat C^{\frac{1}{k}}\right)^{\frac{1}{p}} (k^{\frac{1}{k}})^{\frac{p-1}{p}}\left(1+\left(\int_{\Om}|u|^{\frac{k pp_s^*}{p_{\mu,s}^*}} dx\right)^{\frac{p_{\mu,s}^*}{p_s^*}}\right)^{\frac{1}{kp}},      
	\end{align}
	where $\DD \hat C= \frac {2{C''}~\Lambda^{p_{\mu,s}^*-p}}{ (S_s)^p}>1.$ Now we use bootstrap argument on \eqref{9.0}. For that, we argue as follows:\\
	If there exists a sequence  $k_n\to\infty$ as $n\to\infty$ such that $$\int_{\Om}|u|^{\frac{k_n pp_s^*}{p_{\mu,s}^*}} dx\leq1,$$ then from \eqref{9.0}, it immediately follows that 
	$$\|u\|_{L^{\infty}(\Om)}\leq1.$$  
	If there is no such sequence satisfying the above condition, then there exists $k_0>0$ such that
	$$\int_{\Om}|u|^{\frac{k pp_s^*}{p_{\mu,s}^*}} dx>1,\text{\; for\;all\; } k\geq k_0.$$	Then from \eqref{9.0}, we infer that 
	\begin{align}\label{9}
	\|u\|_{L^{k p_s^*}(\Om)}\leq \left(C_*^{\frac{1}{k}}\right)^{\frac{1}{p}} (k^{\frac{1}{k}})^{\frac{p-1}{p}} \|u\|_{L^{\frac{k pp_s^*}{p_{\mu,s}^*}}(\Om)}, \text{  for\;all } k\geq k_0,
	\end{align} where $C_*=2\hat C>1.$ 
	Choose  $k=k_1:=k_0\frac{p_{\mu,s}^*}{p}>1$ as the first iteration.   Thus,  \eqref{9} yields that
	\begin{align}\label{it1}
	\|u\|_{L^{k{_1} p_s^*}(\Om)}\leq \left(C_*^{\frac{1}{k{_1}}}\right)^{\frac{1}{p}} (k_{1}^{\frac{1}{k_{1}}})^{\frac{p-1}{p}}\|u\|_{L^{k_0 p_s^*}(\Om)}.
	\end{align}
	Again by taking $k=k_2:=k_1 \frac{p_{\mu,s}^*}{p}$ as the second iteration in \eqref{9} and then employing   \eqref{it1} in it, we get
	\begin{align}\label{it2}
	\|u\|_{L^{k_2 p_s^*}(\Om)}&\leq \left(C_*^{\frac{1}{k_2}}\right)^{\frac{1}{p}} \left[(k_2)^{\frac{1}{k_2}}\right]^{\frac{p-1}{p}}\|u\|_{L^{ k_1 p_s^*}(\Om)}\n
	&\leq \left(C_*^{\frac{1}{k_1}+\frac{1}{k_2}}\right)^{\frac{1}{p}} \left[(k_1)^{\frac{1}{k_1}}.(k_2)^{\frac{1}{k_2}}\right]^{\frac{p-1}{p}}\|u\|_{L^{k_0  p_s^*}(\Om)}.
	\end{align}
	In this fashion,  taking $k=k_n:=k_{n-1}\frac{p_{\mu,s}^*}{p}$ as the $n^{th}$ iteration and iterating for $n$ times , we obtain
	\begin{align}\label{itn}
	\|u\|_{L^{k_n p_s^*}(\Om)}&\leq \left(C_*^{\frac{1}{k_n}}\right)^{\frac{1}{p}} \left[(k_n)^{\frac{1}{k_n}}\right]^{\frac{p-1}{p}}\|u\|_{L^{ k_{n-1} p_s^*}(\Om)}\n
	&\leq \left(C_*^{\DD\frac{1}{k_1}+\frac{1}{k_2}\cdots +\frac{1}{k_n}}\right)^{\frac{1}{p}} \left[(k_1)^{\DD\frac{1}{k_1}}.(k_2)^{\DD\frac{1}{k_2}}\cdots (k_n)^{\DD\frac{1}{k_n}}\right]^{\frac{p-1}{p}}\|u\|_{L^{k_0  p_s^*}(\Om)}\n
	&=\left(C_*^{\DD\sum_{j=1}^{n}{\frac{1}{k_j}}}\right)^{\frac{1}{p}}\left(\prod_{j=1}^{n}\left(k_j^{\sqrt{1/{k_j}}}\right)^{\sqrt{1/{k_j}}}\right)^{\frac{p-1}{p}}\|u\|_{L^{ k_0 p_s^*}(\Om)},
	\end{align}
	where $k_j=\left(\frac{p_{\mu,s}^*}{p}\right)^j.$ Since $\frac{p_{\mu,s}^*}{p}>1,$ we have $k_j^{\DD\sqrt{1/{k_j}}}>1$ for all $j\in\mathbb N$ and
	$$\lim_{j\to\infty}k_j^{\DD\sqrt{1/{k_j}}}=1.$$
	Hence, it follows that there exists a constant $C^*>1,$ independent of $n,$ such that $k_j^{\DD\sqrt{1/{k_j}}}<C^*$ and thus, \eqref{itn} gives
	\begin{align}\label{10}
	\|u\|_{L^{k_n p_s^*}(\Om)}&\leq \left(C_*^{\DD\sum_{j=1}^{n}{\frac{1}{k_j}}}\right)^{\frac{1}{p}}\left({C^*}^{\DD\sum_{j=1}^{n}\sqrt{1/{k_j}}}\right)^{\frac{p-1}{p}}\|u\|_{L^{k_0  p_s^*}(\Om)}.
	\end{align}
	As limit $n\to\infty,$ the sum of the following geometric series are given as: $$\sum_{j=1}^{\infty}{\frac{1}{k_j}}=\sum_{j=1}^{n}\left({\frac{p}{p_{\mu,s}^*}}\right)^j=\frac{p/p_{\mu,s}^*}{1-p/p_{\mu,s}^*}=\frac{p}{p_{\mu,s}^*-p}$$ and
	$$\sum_{j=1}^{\infty}{\frac{1}{\sqrt{k_j}}}=\sum_{j=1}^{n}{\left(\sqrt{\frac{p}{p_{\mu,s}^*}}\right)^{j}}=\frac{\sqrt p}{\sqrt{ p_{\mu,s}^*}-\sqrt p}.$$
	Thus, from the last two relations and \eqref{10}, we get that
	\begin{align}\label{contr}
	\|u\|_{L^{\nu_n}(\Om)}\leq \left(C_*\right)^{\DD\frac{1}{p_{\mu,s}^*-p}}\left(C^*\right)^{\DD\frac{p-1}{\sqrt {p}(\sqrt{p_{\mu,s}^*}-\sqrt {p})}}\|u\|_{L^{k_0  p_s^*}(\Om)},
	\end{align} where $\nu_n:=k_n p_s^*.$ Note that, $\nu_n\to\infty$ as $n\to\infty.$ Therefore, we claim that  
	\begin{align}\label{claim}
	%	\|u\|_{L^{\infty}(\RR^N)}=\|u\|_{L^{\infty}(\Om)}\leq \left(C_*\right)^{\DD\frac{1}{p_{\mu,s}^*-p}}\left(C^*\right)^{\DD\frac{p-1}{\sqrt {p}(\sqrt{p_{\mu,s}^*}-\sqrt {p})}}\|u\|_{L^{k_0  p_s^*}(\Om)}<\infty.
	u\in L^\infty(\Om).
	\end{align}
	Indeed, if not then 
	%	let us assume $\|u\|_{L^{\infty}(\Om)}>\mathcal{C}\|u\|_{L^{ k_0 p_s^*}(\Om)}, $
	there exists $\vartheta>0$ and a subset $\mathcal{S}$ of $\Om$ with $|\mathcal{S}|>0$ such that 
	$$u(x)>\mathcal{C}\|u\|_{L^{k_0  p_s^*}(\Om)}+\vartheta\text{~~for } x\in\mathcal{S},$$
	where $$\mathcal{C}=\left(C_*\right)^{\DD\frac{1}{p_{\mu,s}^*-p}} \left(C^*\right)^{\DD\frac{p-1}{\sqrt {p}(\sqrt{p_{\mu,s}^*}-\sqrt {p})}}.$$ The above implies
	\begin{align*}
	\DD\liminf_{\nu_n\to\infty}\left(\int_{\Om}|u(x)|^{\nu_n}dx\right)^{\frac{1}{\nu_n}}
	&\geq \DD\liminf_{\nu_n\to\infty}\left(\int_{\mathcal{S}}|u(x)|^{\nu_n}dx\right)^{\frac{1}{\nu_n}}\\&\geq\DD\liminf_{\nu_n\to\infty}\left(\mathcal{C}\|u\|_{L^{ k_0 p_s^*}(\Om)}+\vartheta\right)\left(|\mathcal{S}|\right)^{\frac{1}{\nu_n}}\\
	&=\mathcal{C}\|u\|_{L^{k_0  p_s^*}(\Om)}+\vartheta,
	\end{align*}
	a contradiction to \eqref{contr}. Therefore, \eqref{claim} holds. Hence the proof the lemma is complete.
\end{proof}
\begin{proof}[{\bf Proof of Theorem \ref{reg}}]
	Now for proving H\"{o}lder regularity we first claim that 
	\begin{align}\label{12}\left(\int_\Om \frac{F(y,u)}{|x-y|^{\mu}}dy\right) f(x,u)\in L^{\infty}(\Om).\end{align} 
	Indeed, by Lemma \ref{L-infty}, we get $u\in L^{\infty}(\Om)$ and thus, by $(\bf{H}),$ we have $f(\cdot,u(\cdot))$, $F(\cdot,u(\cdot))\in L^\infty(\Om),$ which imply that 
	\begin{align*}\left|\int_\Om \frac{F(y,u)}{|x-y|^{\mu}}dy\right|&\leq \|F(\cdot,u(\cdot))\|_{L^{\infty}(\Om)} 
	\left[\int_{ \Om\cap\{|x-y|<1\}}\frac{dy}{|x-y|^\mu}+\int_{ \Om\cap\{|x-y|\geq1\}}\frac{dy}{|x-y|^\mu}\right]\\
	&\leq \|F(\cdot,u(\cdot))\|_{L^{\infty}(\Om)} \left[\int_{ \Om\cap\{\ol r\leq1\}}\ol {r}^{N-1-\mu}d\ol r+|\Om|\right]\\
	&<\infty,
	\end{align*} 
	and	since $0<\mu<N$, \eqref{12} holds. Now by applying Proposition \ref{regularity}, we
	finally can conclude that there exists some $\al\in[0,s)$, depending upon $s,p,\Om$ such that $u\in C^{0,\al}_d(\ol{\Om}).$ Hence, the proof is complete.
\end{proof}

\begin{proof} [{\bf Proof of Theorem \ref{thm2}}] 
Here we follow the approach as in (\cite{SH}, Theorem 1.1). We consider the two cases separately.\\
		$(a)$\it{Critical Case}: $r=p_{\mu,s}^*$ in $(\bf{H})$:\\
	First, we  show $\rm(i)$ implies $\rm(ii)$. 
	From (i), it follows that
	$\langle J'(w_0),\phi\rangle\geq 0$ for all $\phi\in W_0^{s,p}(\Om)\cap C^{0}_{d}(\overline\Omega)$.
	Since $W_0^{s,p}(\Om)\cap C^{0}_{d}(\overline\Omega)$ is a dense subspace of $W_0^{s,p}(\Om)$, 
	we have $$\langle J'(w_0),\phi\rangle=0\text {\;\; for all } \phi\in W_0^{s,p}(\Om).$$ 
	Therefore, by Theorem \ref{reg} we infer that $w_0\in C^{0}_{d}(\overline\Omega)\cap L^\infty(\Omega)$.
	Here we argue by contradiction. Suppose (ii) does not hold. Then  there exists a sequence, say $\{\tilde w_n\}$ in $W_0^{s,p}(\Om)$
	such that $\tilde w_n\to w_0$ strongly in $W_0^{s,p}(\Om)$ as $n\to\infty$ and $J(\tilde w_n)<J(w_0)$ for all $n\in\N$.
	Next, we introduce a suitable truncation to the nonlinearity $f$ to handle its critical growth (in the sense of Hardy-Littlewood-Sobolev inequality). For each $j\in \N,$ we define $f_j:\Om\times\RR\to\RR$ as
	$f_j(x,t):=f(x,T_j(t))$ where 
	\[T_j(t)=\begin{cases}
		-j &\text{ for }t\leq-j\\
		t &\text{ for }-j\leq t\leq j\\
		j &\text{ for }t\geq j.
	\end{cases}\]
%	\[f_j(x,t)=\max\{f(x,-j),\min\{f(x,j),f(x,t)\}\}\]
	We define the corresponding truncated energy functional   $J_j:W_0^{s,p}(\Om)\to\RR$ as
	\[J_j(u)=\frac{\|u\|_{s,p}^p}{p}-\int_\Omega\int_\Omega\frac {F_j(x,u) F_j(y,u)}{|x-y|^\mu}dxdy,\] where $F_{j}(x,t)=\int_0^t f_{j}(x,\tau)\,d\tau.$
	One can see that $J_j\in C^1( W_0^{s,p}(\Om))$. Note that, by $(\bf H)$,
	\[|f_{j}(x,t)| \leq \tilde C_j:=K_{0}\,(1+j^{p_{\mu,s}^*-1}), \;\;\;|F_{j}(x, t)|\leq K_{j}(1+|t|)\] are of subcritical growth
	(in the sense of Hardy-Littlewood-Sobolev inequality), where $ \tilde C_j,\, K_{j},$ $j\in\mathbb N,$ are positive real numbers. 
	Now by applying Lebesgue dominated convergence theorem, for all $u\in W_0^{s,p}(\Om),$ we have
	\begin{align}\label{appr}
	\lim_{j\to \infty}F_j(x,u)=\lim_{j\to \infty}\int_0^u f_j(x,t)\,dt =  F(x,u).
	\end{align}
	For fixed $n\in\N$ and $0<\xi_{n}<J(w_0)-J(\tilde w_n)$, using \eqref{appr}, we can find $j_n>C(\|w_0\|_{L^\infty(\Om)})>1$ such that
	{\begin{align}\label{1.0}\Big|\int_\Omega \int_\Omega \frac {F_{j_n}(x,\tilde w_n)F_{j_n}(y,\tilde w_n)}{|x-y|^{\mu}}\,dxdy-
		\int_\Omega\int_\Omega \frac{F(x,\tilde w_n)F(y,\tilde w_n)}{|x-y|^{\mu}}\,dxdy\Big| < \xi_{n}.\end{align}}
	For all $n\in\N,$ let us set 
	\[\sigma_n:=\|\tilde w_n-w_0\|_{L^{\p}(\Om)}, \qquad B_{\sigma_n}:=\big\{ u\in W_0^{s,p}(\Om):\,\| u-w_0\|_{L^{\p}(\Om)}\leq\sigma_n\big\}.\]
	Using the continuous embedding $W_0^{s,p}(\Om)\hookrightarrow  L^{\p}(\Om),$ we have $\sigma_n\to 0$ as $n\to\infty$.
	Now $B_{\sigma_n}$ is a closed convex subset of $W_0^{s,p}(\Om)$  and hence weakly closed  subset of $W_0^{s,p}(\Om)$. Hence by the definition, $J_{j_n}$ is sequentially weakly lower semi-continuous and coercive in $B_{\sigma_n}.$ 
	Thus, for any $n\in\mathbb{N},$ there exists $w_{n}\in  B_{\sigma_n}$ such that
	\begin{align}\label{min}
	J_{j_n}(w_n)=\inf_{u\in B_{\sigma_n}}J_{j_n}(u).
	\end{align}
	In view of \eqref{1.0} and by the choice of $\xi_{n}$ and $j_n$,  we get
	\begin{align}
	\label{jjn}
	J_{j_n}(w_{n})\leq J_{j_n}(\tilde w_n) \leq J(\tilde w_n)+\xi_{n} < J(w_0) = J_{j_n}(w_0).
	\end{align}
	{\bf{Claim:}}  There  exists $m_n\geq 0$ such that  
	\begin{align}\label{lag}
	\fpl w_n+m_n(w_n-w_0)^{\p-1} = \left(\int_{\Om}\frac {F_{j_n}(y,w_n)}{|x-y|^\mu}dy\right){f_{j_n}(x,w_n)}.
	\end{align}
	Since $w_n\in B_{\sigma_n}$, in the process of the proof of our claim we encounter with two possible cases:\\
	{\bf{Case:}}$\|w_n-w_0\|_{L^{\p}(\Om)}<\sigma_n$. Then \eqref{min} yields that $w_n$ is a local minimizer of $J_{j_n}$ in $ W_0^{s,p}(\Om)$ and hence, $J'_{j_n}(w_n)=0$. Thus, \eqref{lag} holds with $m_n=0$.\\\\
	{\bf{Case:}}$\|w_n-w_0\|_{L^{\p}(\Om)}=\sigma_n.$ We define the functional  $\mathcal{I}: W_0^{s,p}(\Om)\to\RR$ as 
	\[\mathcal{I}(u):=\frac{\|u-w_0\|_{L^{\p}(\Om)}^{\p}}{\p}.\]
	One can check that  $\mathcal{I}\in C^1( W_0^{s,p}(\Om),\RR).$ Next, we consider the following $C^1$-manifold in $ W_0^{s,p}(\Om):$ 
	\[\mathscr{M}_n:=\Big\{u\in W_0^{s,p}(\Om):\,\mathcal{I}(u):=\frac{\sigma_n^{\p}}{\p}\Big\}.\] Now \eqref{min} yields that $w_n$ is 
	a global minimizer of $J_{j_n}$ on $\mathscr{M}_n.$ Therefore, by applying Lagrange's multipliers rule, there exists $m_n\in\RR$ such that in $W^{-s,p'}(\Om)$
	\[J'_{j_n}(w_n)+m_n \mathcal{I}'(w_n) = 0,\]
	the PDE form of which is  \eqref{lag}. Furthermore, using \eqref{min} again we can derive
	\[m_n = -\frac{\langle J'_{j_n}(w_n),w_0-w_n\rangle}{\langle \mathcal{I}'(w_n),w_0-w_n\rangle} \ge 0\]
	such that, possibly $m_n\to\infty$ as $n\to\infty.$ Hence, our claim is proved.
	As per the construction,  $w_n\to w_0$ strongly in $L^{\p}(\Omega)$ as $n\to\infty$. Moreover, applying Lemma \ref{L-infty} for \eqref{lag},
	we can have $w_n\in L^\infty(\Omega),$ for all $n\in\mathbb N$.  Next, we will show that, up to a subsequence, $\{w_n\}$ is bounded in $L^\infty(\Omega)$.
%	\begin{equation}\label{uni}
%	w_n\to w_{0} \qquad \text{ strongly in $L^\infty(\Omega)$ as $n\to\infty$.}
%	\end{equation}
	Subtracting \eqref{mainprob} from \eqref{lag}, for all $n\in\N,$ we get
	% in
	% $W^{-s,p'}(\Om)$
	\begin{align}\label{dif}
	&\fpl w_n-\fpl w_0+m_n(w_n-w_0)^{\p-1}\n&\qquad= \int_\Om\left(\frac{F_{j_n}(y,w_n)}{|x-y|^\mu}dy\right)f_{j_n}(x,w_n)-\int_\Om\left(\frac{F(y,w_0)}{|x-y|^\mu}dy\right)f(x,w_0).
	\end{align}
	We set $v_n:=w_n-w_0\in W_0^{s,p}(\Om)\cap L^\infty(\Omega).$ Then for $\beta:=kp-p+1,~ k\geq 1,$ using  $v_n^{\beta}\in W_0^{s,p}(\Om)$,  as a test function in the weak formulation of \eqref{dif}, we deduce
	\begin{align}\label{pqw}
&\langle\fpl w_n-\fpl w_0,v_n^\beta\rangle+m_n\int_\Omega|v_n|^{\p+\beta-1}\,dx\n&\qquad = 
	\int_\Omega\int_\Omega\frac{F_{j_n}(y,w_n)f_{j_n}(x,w_n)-F(y,w_0)f(x,w_0)}{|x-y|^{\mu}} \;(v_n(x))^\beta\,dxdy.
	\end{align}
	By using Lemma \ref{mon} and the continuous embedding $ W_0^{s,p}(\Om)\hookrightarrow L^{\p}(\Omega),$ from the left-hand side of \eqref{pqw}, we deduce that
	\begin{align}\label{1.1}
	\Big[\int_\Omega |v_n|^{k\p}\,dx\Big]^{\frac{p}{\p}} \leq C\,\Big\|v_n^{{\frac{p+\beta-1}{p}}}\Big\|_{s,p}^p
	\leq C\;\beta^{p-1}\,\langle\fpl w_n-\fpl w_0,v_n^\beta\rangle.
	\end{align} Now we estimate the right-hand side in \eqref{pqw}.
	For that, first observe that by the construction, for $(x,t)\in\Om\times\RR,$ we have $|F_{j_n}(x,t)|\leq|F(x,t)|$ and $|f_{j_n}(x,t)|\leq|f(x,t)|.$ 
	Therefore, using $(\bf H)$ and the fact $w_0\in L^\infty(\Om),$ together with  the inequalities $ (x_1+x_2)^{\gamma}\leq x_1^{\gamma}+x_2^{\gamma},~0<\gamma<1, ~x_1,x_2\geq 0,$ and $ (x_1+x_2)^{\gamma}\leq 2^{\gamma-1} (x_1^{\gamma}+x_2^{\gamma}),~\gamma>1,~ x_1,x_2\geq 0,$ we have 
	\begin{align}\label{difgro}
	&|F_{j_n}(y,w_n)f_{j_n}(x,w_n)-F(y,w_0)f(x,w_0)|\n
	&\leq |F(y,w_n)|\;|f(x,w_n)|+|F(y,w_0)|\;|f(x,w_0)|\nonumber\\
	&\leq C\left[1+\left(1+|(v_n+ w_0)(y)|^{p_{\mu,s}^*}\right)\left(1+|(v_n+ w_0)(x)|^{p_{\mu,s}^*-1}\right)\right]\nonumber\\
	&\leq 2^{2p_{\mu,s}^*}C\Big[1+\Big\{\left(1+|v_n(y)|^{p_{\mu,s}^*}+ |w_0(y)|^{p_{\mu,s}^*}\right)\left(1+|v_n(x)|^{p_{\mu,s}^*-1}+ |w_0(x)|^{p_{\mu,s}^*-1}\right)\Big\}\Big]\nonumber\\
	&\leq \tilde K\left[\left(1+|v_n(y)|^{p_{\mu,s}^*}\right)\left(1+|v_n(x)|^{p_{\mu,s}^*-1}\right)\right]
	\end{align} 
	for some constant $ \tilde{ K}>0$ (independent of $j,n, w_n,v_n$). 
	Let us denote $\widehat{g}(x,t):= 1+|t|^{p_{\mu,s}^*-1}$  and $\widehat{G}(x,t):= 1+|t|^{p_{\mu,s}^*}$.
	Therefore, using \eqref{pqw}-\eqref{difgro} together with $m_n\ge 0$ and Hardy-Littlewood-Sobolev inequality,  for all $n\in\N$ and $k,\beta\ge 1,$ we obtain
	\begin{align}\label{qest}
	&\|v_n\|_{L^{k\p}(\Om)}^{kp}\nonumber\\ &\leq K_2\,\beta^{p-1}\,
	\left[\left\{\left(\int_\Omega|\widehat{g}(x,v_n)~|v_n|^{\beta}|^{\frac{p_s^*}{p_{\mu,s}^*}}\,dx\right)^{\frac{p_{\mu,s}^*}{p_s^*}}\left(\int_\Omega|\widehat{G}(x,v_n)|^{\frac{p_s^*}{p_{\mu,s}^*}}\,dx\right)^{\frac{p_{\mu,s}^*}{p_s^*}}\right\}\right]\nonumber\\
	&\leq K_2\,\beta^{p-1}\,
	\left[\left\{\left(\int_\Omega|v_n|^{\beta\frac{p_s^*}{p_{\mu,s}^*}}\,dx\right)^{\frac{p_{\mu,s}^*}{p_s^*}}+\left(\int_\Omega|v_n|^{(p_{\mu,s}^*+\beta-1)\frac{p_s^*}{p_{\mu,s}^*}}\,dx\right)^{\frac{p_{\mu,s}^*}{p_s^*}}\right\}
	\left\{\left(\|v_n\|_{L^{\frac{p_s^*}{p_{\mu,s}^*}}(\Om)}+\left\|v_n\right\|_{L^{{{p_s^*}}}(\Om)}^{p_{\mu,s}^*}\right)\right\}\right]\nonumber\\
		&\leq K_2\,\beta^{p-1}\,	\left(\|v_n\|_{L^{\frac{p_s^*}{p_{\mu,s}^*}}(\Om)}+\left\|v_n\right\|_{L^{{{p_s^*}}}(\Om)}^{p_{\mu,s}^*}\right)
	\left[\left(\int_\Omega|v_n|^{\beta\frac{p_s^*}{p_{\mu,s}^*}}\,dx\right)^{\frac{p_{\mu,s}^*}{p_s^*}}+\left(\int_\Omega|v_n|^{(p_{\mu,s}^*+\beta-1)\frac{p_s^*}{p_{\mu,s}^*}}\,dx\right)^{\frac{p_{\mu,s}^*}{p_s^*}}
\right]\n
&\leq K_2\,\beta^{p-1}\,	\left(\|v_n\|_{L^{\frac{p_s^*}{p_{\mu,s}^*}}(\Om)}+\left\|v_n\right\|_{L^{{{p_s^*}}}(\Om)}^{p_{\mu,s}^*}\right)
\Bigg[ \left(\int_{\Om\cap\{|v_n|<1\}}|v_n|^{\beta\frac{p_s^*}{p_{\mu,s}^*}}dx\right)^{\frac{p_{\mu,s}^*}{p_s^*}}+\left(\int_{\Om\cap\{|v_n|\geq1\}}|v_n|^{\beta\frac{p_s^*}{p_{\mu,s}^*}}dx\right)^{\frac{p_{\mu,s}^*}{p_s^*}}\n
&\qquad\qquad\qquad\qquad\qquad\qquad\qquad\qquad\qquad\qquad+\left(\int_\Omega|v_n|^{(p_{\mu,s}^*+\beta-1)\frac{p_s^*}{p_{\mu,s}^*}}\,dx\right)^{\frac{p_{\mu,s}^*}{p_s^*}}\Bigg]\n
&\leq K_2\,\beta^{p-1}\,	\left(\|v_n\|_{L^{\frac{p_s^*}{p_{\mu,s}^*}}(\Om)}+\left\|v_n\right\|_{L^{{{p_s^*}}}(\Om)}^{p_{\mu,s}^*}\right)
\Bigg[ \left(\int_{\Om\cap\{|v_n|<1\}}|v_n|^{\beta\frac{p_s^*}{p_{\mu,s}^*}}dx\right)^{\frac{p_{\mu,s}^*}{p_s^*}}\n&\qquad\qquad\qquad\qquad\qquad\qquad+\left(\int_{\Om\cap\{|v_n|\geq1\}}\left(|v_n|^{p_{\mu,s}^*+\beta-1}\right)^{\frac{p_s^*}{p_{\mu,s}^*}}dx\right)^{\frac{p_{\mu,s}^*}{p_s^*}}
+\left(\int_\Omega|v_n|^{(p_{\mu,s}^*+\beta-1)\frac{p_s^*}{p_{\mu,s}^*}}\,dx\right)^{\frac{p_{\mu,s}^*}{p_s^*}}\Bigg]\n
&\leq K_2\,\beta^{p-1}\,	\left(\|v_n\|_{L^{\frac{p_s^*}{p_{\mu,s}^*}}(\Om)}+\left\|v_n\right\|_{L^{{{p_s^*}}}(\Om)}^{p_{\mu,s}^*}\right)
\left[ \left(\int_{\Om\cap\{|v_n|<1\}}|v_n|^{\beta\frac{p_s^*}{p_{\mu,s}^*}}dx\right)^{\frac{p_{\mu,s}^*}{p_s^*}}
+2\left(\int_\Omega|v_n|^{(p_{\mu,s}^*+\beta-1)\frac{p_s^*}{p_{\mu,s}^*}}\,dx\right)^{\frac{p_{\mu,s}^*}{p_s^*}}\right]\n
%\end{align*}
%\begin{align}\label{qest}
&\leq K_2\,\beta^{p-1}\,	\left(\|v_n\|_{L^{\frac{p_s^*}{p_{\mu,s}^*}}(\Om)}+\left\|v_n\right\|_{L^{{{p_s^*}}}(\Om)}^{p_{\mu,s}^*}\right)
\Bigg[|\Om|^{\frac{p_{\mu,s}^*-1}{p_{\mu,s}^*+\beta-1}.\frac{p_{\mu,s}^*}{p_s^*}}\left\{\left(\int_{\Om\cap\{|v_n|<1\}}|v_n|^{\frac{p_s^*}{p_{\mu,s}^*}{(p_{\mu,s}^*+\beta-1)}}dx\right)^{\frac{\beta}{p_{\mu,s}^*+\beta-1}}\right\}^{\frac{p_{\mu,s}^*}{p_s^*}}\n&\qquad\qquad\qquad\qquad\qquad\qquad\qquad\qquad\qquad\qquad+2\left(\int_\Omega|v_n|^{(p_{\mu,s}^*+\beta-1)\frac{p_s^*}{p_{\mu,s}^*}}\,dx\right)^{\frac{p_{\mu,s}^*}{p_s^*}}\Bigg]\n
	&\leq K_2\,\beta^{p-1}\,	\left(\|v_n\|_{L^{\frac{p_s^*}{p_{\mu,s}^*}}(\Om)}+\left\|v_n\right\|_{L^{{{p_s^*}}}(\Om)}^{p_{\mu,s}^*}\right)
	\Bigg[|\Om|^{\frac{p_{\mu,s}^*-1}{p_{\mu,s}^*+\beta-1}.\frac{p_{\mu,s}^*}{p_s^*}}\left\{\left(\int_{\Om\cap\{|v_n|<1\}}|v_n|^{\frac{p_s^*}{p_{\mu,s}^*}{(p_{\mu,s}^*+\beta-1)}}dx\right)^{\frac{\beta}{p_{\mu,s}^*+\beta-1}}\right\}^{\frac{p_{\mu,s}^*}{p_s^*}}\n&\qquad\qquad\qquad\qquad\qquad\qquad\qquad\qquad\qquad\qquad+
	2\left\|v_n\right\|_{L^{{{p_s^*}}}(\Om)}^{p_{\mu,s}^*-p}\left\|v_n\right\|_{L^{{{kp_s^*}}}(\Om)}^{kp}\Bigg]
	%2\left(\int_\Omega|v_n|^{(p_{\mu,s}^*+\beta-1)\frac{p_s^*}{p_{\mu,s}^*}}\,dx\right)^{\frac{p_{\mu,s}^*}{p_s^*}}\right]
\end{align}
	with some constant $K_2>0,$ independent of $k,\beta,j,n, v_n$.
	Now two cases arise for the first integration expression in the right-hand side of \eqref{qest}.
\begin{align*}
				&\text{Either  } \left(\int_{\Om\cap\{|v_n|<1\}}|v_n|^{\frac{p_s^*}{p_{\mu,s}^*}{(p_{\mu,s}^*+\beta-1)}}dx\right)^{\frac{\beta}{p_{\mu,s}^*+\beta-1}}\leq 1 \text{  or  }>1, \text{\;\; that is,}\n
				&\text{ either  }\left(\int_{\Om\cap\{|v_n|<1\}}|v_n|^{\frac{p_s^*}{p_{\mu,s}^*}{(p_{\mu,s}^*+\beta-1)}}dx\right)^{\frac{\beta}{p_{\mu,s}^*+\beta-1}}\leq 1\\ &\qquad\qquad\qquad\qquad \text{  or  }\\ &\left(\int_{\Om\cap\{|v_n|<1\}}|v_n|^{\frac{p_s^*}{p_{\mu,s}^*}{(p_{\mu,s}^*+\beta-1)}}dx\right)^{\frac{\beta}{p_{\mu,s}^*+\beta-1}}<\int_{\Om\cap\{|v_n|<1\}}|v_n|^{\frac{p_s^*}{p_{\mu,s}^*}{(p_{\mu,s}^*+\beta-1)}}dx.
		\end{align*}
		The above implies 
%		\begin{align}\label{n11}
%			\left(\int_{\Om\cap\{|v_n|<1\}}|v_n|^{\frac{p_s^*}{p_{\mu,s}^*}(p_{\mu,s}^*+\beta-1)}dx\right)^{\frac{\beta}{p_{\mu,s}^*+\beta-1}}\leq	1+\int_{\Om\cap\{|v_n|<1\}}\left(|v_n|^{p_{\mu,s}^*+\beta-1}\right)^{\frac{p_s^*}{p_{\mu,s}^*}}dx
%		\end{align} 
%	that is
	\begin{align}\label{n12}
	\left\{	\left(\int_{\Om\cap\{|v_n|<1\}}|v_n|^{\frac{p_s^*}{p_{\mu,s}^*}(p_{\mu,s}^*+\beta-1)}dx\right)^{\frac{\beta}{p_{\mu,s}^*+\beta-1}}\right\}^{\frac{p_s^*}{p_{\mu,s}^*}}&\leq	1+\left\{\int_{\Om\cap\{|v_n|<1\}}\left(|v_n|^{p_{\mu,s}^*+\beta-1}\right)^{\frac{p_s^*}{p_{\mu,s}^*}}dx\right\}^{\frac{p_s^*}{p_{\mu,s}^*}}\n&\leq1+\left(\int_{\Om\cap\{|v_n|<1\}}\left(|v_n|^{p+\beta-1}\right)^{\frac{p_s^*}{p_{\mu,s}^*}}dx\right)^{\frac{p_{\mu,s}^*}{p_s^*}}\n&\leq1+\|v_n\|_{L^{\frac{kpp_s^*}{p_{\mu,s}^*}}(\Om)}^{kp}.
	\end{align} 
%Next, by applying H\"older inequality for the second integral in the right hand-side of \eqref{qest}, we obtain
%\begin{align}\label{rs1}
%\left(\int_\Omega|v_n|^{(p_{\mu,s}^*+\beta-1)\frac{p_s^*}{p_{\mu,s}^*}}\,dx\right)^{\frac{p_{\mu,s}^*}{p_s^*}}
%	\leq 2\left\|v_n\right\|_{L^{{{p_s^*}}}(\Om)}^{p_{\mu,s}^*-p}\left\|v_n\right\|_{L^{{{kp_s^*}}}(\Om)}^{kp}.
%\end{align}
	Now using the fact that $v_n\to 0$ strongly in $L^{\p}(\Omega)$ as $n\to\infty$, we eventually have $v_n\to 0$ strongly in $L^{\frac{\p}{p_{\mu,s}^*}}(\Omega)$ as $n\to\infty$. So, for sufficiently large $n\in\mathbb N$, we can have $\left(\|v_n\|_{L^{\frac{p_s^*}{p_{\mu,s}^*}}(\Om)}+\left\|v_n\right\|_{L^{{{p_s^*}}}(\Om)}^{p_{\mu,s}^*}\right):=\e_n\to 0$. By plugging \eqref{n12}  into \eqref{qest}, we obtain
	\begin{align*}
	\|v_n\|_{L^{k\p}(\Om)}^{kp}&\leq K_3\,\beta^{p-1}\,	\e_n
	\left[\left(\int_\Omega|v_n|^{\beta\frac{p_s^*}{p_{\mu,s}^*}}\,dx\right)^{\frac{p_{\mu,s}^*}{p_s^*}}+\left(\int_\Omega|v_n|^{(p_{\mu,s}^*+\beta-1)\frac{p_s^*}{p_{\mu,s}^*}}\,dx\right)^{\frac{p_{\mu,s}^*}{p_s^*}}
	\right]\nonumber\\
	&\leq K_3\,\beta^{p-1}\,	\e_n
	\left[1+\left\|v_n\right\|_{L^{{{\frac{kpp_s^*}{p_{\mu,s}^*}}}}(\Om)}^{kp}+\left\|v_n\right\|_{L^{{{p_s^*}}}(\Om)}^{p_{\mu,s}^*-p}\left\|v_n\right\|_{L^{{{kp_s^*}}}(\Om)}^{kp}
	\right],
	\end{align*} where the	constant ${K_3}>0$ is independent of $j,n,k,\beta,v_n$.
Now  choosing $n\in \mathbb N$ sufficiently large  in the above inequality such that $K_3\left\|v_n\right\|_{L^{{{p_s^*}}}(\Om)}^{p_{\mu,s}^*-p}\beta^{p-1}\e_n<\frac 12$ , we get 
\begin{align*}
\|v_n\|_{L^{k\p}(\Om)}^{kp}&\leq 2 K_3\,\beta^{p-1}\e_n\left[1+\left\|v_n\right\|_{L^{{{\frac{kpp_s^*}{p_{\mu,s}^*}}}}(\Om)}^{kp}\right]\n&\leq 2 K_3\,\beta^{p-1}\e_n\left[1+\left\|v_n\right\|_{L^{{{{kp_s^*}}}}(\Om)}^{kp}\right].
\end{align*} 
Again, by taking $\e_n\leq\frac{1}{4 K_3\,\beta^{p-1}}$ for sufficiently large $n\in\mathbb N$ in the above inequality, we get \begin{align*}
\|v_n\|_{L^{k\p}(\Om)}^{kp}&\leq 4 K_3\,\beta^{p-1}\e_n<1.
\end{align*} 
This implies that $\|v_n\|_{L^{k\p}(\Om)}\leq1^{1/kp}$.
Now, letting $k\to \infty$, we infer that
$\|v_n\|_{L^{\infty}(\Om)}\leq  1$ { for sufficiently large } $n\in\mathbb N.$ 
%	Next, using similar arguments as in the proof of Lemma \ref{L-infty},  we deduce that there exists some
%	constant ${K_3}>0$ (independent of $j,n,k,\beta,v_n$) such that 
%	\[\|v_n\|_{L^\infty(\Om)}\leq {K_3}\|v_n\|_{L^{\p}(\Om)},\]
%	which yields that   $v_n\to 0$ strongly in $L^\infty(\Omega)$ being  $v_n\to 0$ strongly in $L^{\p}(\Omega)$ as $n\to\infty$. 
Thus $\{v_n\}$ is  bounded in $L^\infty(\Omega)$ and  hence the boundedness of $\{w_n\}$ in $ L^{\infty}(\Om)$ follows.
	\vskip2pt
	\noindent
	Now for sufficiently large $n$ $\in\N$, \eqref{lag} can be rewritten as
	\begin{align}\label{lag1}
	\fpl w_n = \left(\int_\Om\frac{F(y,w_n)}{|x-y|^\mu}dy\right)f(x,w_n)-m_n(w_n-w_0)^{\p-1} \ \text{in $W^{-s,p'}(\Om)$.}
	\end{align}
	Since   $\{w_n\}$ is bounded in $L^\infty(\Omega)$ and 
	hence, by $(\bf H),$ it follows that the sequence $\left\{\left(\int_\Om\frac{F(y,w_n)}{|x-y|^\mu}dy\right)f(\cdot,w_n)\right\}$ is also uniformly bounded.
	Therefore, using this fact and  again using $v_n^\beta$ (where $v_n=w_n-w_0$, $\beta=kp+p-1,\,k\ge 1$) as a test function in the weak formulation of \eqref{dif}  and applying Lemma \ref{mon}, 
	for all sufficiently large $n\in\mathbb N$, we achieve
	\begin{align*}
	m_n\,\int_\Omega|v_n|^{\p+\beta-1}\,dx
	&\leq K_4\,\int_\Omega |v_n|^{\beta}\,dx \\
	&\leq K_4\,\Big[\int_\Omega|v_n|^{\p+\beta-1}\,dx\Big]^{\frac{\beta}{\p+\beta-1}}|\Omega|^{\frac{\p-1}{\p+\beta-1}},
	\end{align*}
	where $K_4>0$ is a constant that is independent of $j,n$, $k,\beta,v_n$. The above  implies
	\[m_n\,\|v_n\|_{L^{\p+\beta-1}(\Om)}^{\p-1} \leq K_5\,|\Omega|^{\frac{\p-1}{\p+\beta-1}},\]
	with some constant $K_5>0$ (independent of $j,n,k,\beta,v_n$ ).
	Letting $k\to\infty$, we have $\beta\to\infty$ and hence, from the last relation, we deduce 
	that \[m_n\,\|v_n\|_{L^\infty(\Om)}^{\p-1}\leq K_5,\]
	that is, $\{m_n\,(w_n-w_0)^{\p-1}\}$ is a bounded sequence in $L^\infty(\Omega)$. Hence, combining this fact along with 
	\eqref{lag1} and Theorem \ref{reg}, we see that $\{w_n\}$ is bounded in {$C^{0,\alpha}_d(\overline\Omega)$}. By the compact embedding 
	${C^{0,\alpha}_d(\overline\Omega)}\hookrightarrow\hookrightarrow C_{d}^{0}(\bar \Omega)$, passing to a subsequence, still denoted by
	$\{w_n\},$ we have $w_n\to w_0$ strongly in $C_{d}^{0}(\overline\Omega)$ as $n\to\infty$. So, for all $n\in\mathbb N$ large enough,
	we obtain that $\|w_n-w_0\|_{C^{0}_{d}(\ol\Om)}\leq\varrho$.
	\par
	On the other hand, since $\{w_n\}$ is bounded in $L^{\infty}(\Omega)$,  we get $J_{j_n}(w_{n})=J(w_{n})$ for sufficiently large $n\in\mathbb N.$ 
	Hence, from  \eqref{jjn}, it  follows that $J(w_n)<J(w_0)$. Thus, we reach at a contradiction to (i) and hence, (ii) is proved.\\
	\par Next, we show that $\rm(ii)$ implies $\rm(i)$. By (ii), we have $\langle J'(v_0),v\rangle=0,$ for all $v\in W_0^{s,p}(\Om)$.
	Therefore, Lemma \ref{L-infty} and Proposition \ref{regularity} imply that $w_0\in C_d^{0}(\ol{\Om}).$ Supposing the contrary, let there exist a sequence
	$\{w_n\}$ in $W_0^{s,p}(\Om)\cap C_d^{0}(\overline{\Om})$ such that $w_n\to w_0$ in $C_d^{0}(\overline{\Om})$ as $n\to\infty$
	and $J(w_n)<J(w_0),$ for all $n\in\mathbb N.$ Thus, we have $w_n\to w_0$ strongly in $L^{\infty}(\Om)$ as $n\to\infty$.
	Hence, by the continuity  and (${\bf H}$), the sequence $\{F(\cdot,w_n)\}$ is bounded in  $L^\infty(\Om)$ and
	\begin{align}\label{ff}
	F(\cdot,w_n)\to F(\cdot, w_0)  \text {\;\;strongly in\;\;} L^\infty(\Om)  \text{\;\;as\;} n\to\infty.
	\end{align}
	Observe that  
	\begin{align}\label{lim}
	I^{(n)}_0:&=\int_{\Om}\int_{\Om}\frac{F(x,w_n)F(y,w_n)}{|x-y|^{\mu}}dxdy
	- \int_{\Om}\int_{\Om}\frac{F(x,w_0)F(y,w_0)}{|x-y|^{\mu}}dxdy\nonumber\\
	&=I^{(n)}_1+I^{(n)}_2,
	\end{align} 
	where \begin{align*}
	I^{(n)}_1&:=\int_{\Om}\int_{\Om}\frac{F(x,w_n)\left[F(y,w_n)-F(y,w_0)\right]}{|x-y|^{\mu}}dxdy,\nonumber\\
	I^{(n)}_2&:=\int_{\Om}\int_{\Om}\frac{\left[F(x,w_n)-F(x,w_0)\right]F(y,w_0)}{|x-y|^{\mu}}dxdy.
	\end{align*}
	By Hardy-Littlewood-Sobolev inequality and \eqref{ff}, we get
	\begin{align}\label{i1}
	|I^{(n)}_1|&\leq \|F(\cdot,w_n)\|_{L^{\frac{2N}{2N-\mu}}(\Om)}\|F(\cdot,w_n)-F(\cdot,w_0)\|_{L^{\frac{2N}{2N-\mu}}(\Om)}\nonumber\\
	&\quad\to0 \text{\quad\; as\;} n\to\infty.
	\end{align}Arguing similarly,  we obtain $|I^{(n)}_2|\to 0$ as $n\to\infty$,  which together with \eqref{i1} and \eqref{lim} implies that $I_0\to 0,$ that is,
	\begin{align}\label{t1}
	\int_{\Om}\int_{\Om}\frac{F(x,w_n)F(y,w_n)}{|x-y|^{\mu}}dxdy
	\to\int_{\Om}\int_{\Om}\frac{F(x,w_0)F(y,w_0)}{|x-y|^{\mu}}dxdy \text{\;\;as\;} n\to\infty.
	\end{align}
	Using \eqref{t1},
	we have
	\begin{align}\label{t2}
	\limsup_{n\to\infty}\frac{\|w_n\|_{s,p}^p}{p}&=\limsup_{n\to\infty}\left[J(w_n)+\frac12\int_{\Om}\int_{\Om}\frac{F(x,w_n)F(y,w_n)}{|x-y|^{\mu}}dxdy\right]\n
	&\leq J(w_0)+\frac{1}{2}\int_{\Om}\int_{\Om}\frac{F(x,w_0)F(y,w_0)}{|x-y|^{\mu}}dxdy\n
	&=\frac{\|w_0\|_{s,p}^p}{p},
	\end{align}
	that is, $\{w_n\}$ is bonded in $W_0^{s,p}(\Om).$ Hence, passing to a subsequence, still denoted by $\{w_n\},$ we have $w_n\rightharpoonup w_0$ 
	weakly in $W_0^{s,p}(\Om)$ as $n\to\infty.$ Therefore, using the lower semicontinuity of norm,
	\begin{align}\label{t3}
	\liminf_{n\to\infty}\|w_n\|_{s,p}\geq\|w_0\|_{s,p}.
	\end{align}
	Since $W_0^{s,p}(\Om) $ is uniformly convex, we get $\|w_n\|_{s,p}\to\|w_0\|_{s,p}$ and consequently 
	Brezis-Lieb lemma gives that $\|w_n-w_0\|_{s,p}\to 0$ as $n\to\infty.$ So we have, for  large $n\in\mathbb N$, $\|w_n-w_0\|_{s,p}\leq\delta$ with $J(w_n)<J(w_0),$ which is a contradiction. Therefore, $\rm(i)$ holds.\\
	{$(b)$\it{ Subcritical Case}}: $r<p_{\mu,s}^*$ in $(\bf{H})$:\\
	In this case, the proof follows using the similar arguments as in the {\it {Critical Case}}, discussed above, by taking $f_j(x,t)=f(x,t)$ for each $j\in\mathbb N$ and  using the compact embedding $W_0^{s,p}(\Om)\hookrightarrow\hookrightarrow L^q(\Om),\;\;1<q<p_s^*.$ Hence,  the proof of the theorem is complete.
\end{proof}
\noindent
\begin{proof}[{\bf Proof of Theorem \ref{thmch4}}]
	First, we define the following truncated function  $\widehat{f}(x,t):\Omega\times\RR\to\RR$ as
	\[\widehat{f}(x,t):=
	\begin{cases}
	f(x,\underline{v}(x)) \ & \text{ if } t \le \underline{v}(x) \\
	f(x,t) \ & \text{ if } \underline{v}(x) < t < \overline{v}(x) \\
	f(x, \overline{v}(x)) & \text{ if } t \ge \overline{v}(x).
	\end{cases}\]
	Clearly by $(\bf{H}),$ we have that  $\widehat{f}$ is  continuous such that for a.e. $x\in\Omega$ and all $t\in\RR$
	\begin{equation}\label{ft}
	\;\;\;	\left.\begin{array}{rl}	
	|\widehat{f}(x,t)| &\le C_1 ( 1 + |\underline{v}|^{r-1} + |\overline{v}|^{r-1}),\\
	|\widehat{F}(x,t)|&=\left|\int_{0}^{t}\widehat{f}(x,\tau)d\tau\right|
	\le C_2 ( 1 + |\underline{v}|^{r-1} + |\overline{v}|^{r-1})|t|,
	\end{array}
	\right\}
	\end{equation}
	for some constants $C_1,C_2>0.$
	We define the operator  $T: W_0^{s,p}(\Om) \rightarrow W^{-s,p'}(\Om)$  as 
	\[\langle T(u),v\rangle = - \int_{\Omega}\int_{\Omega} \frac{\widehat F(y,u)\widehat{f}(x,u)v(x)}{|x-y|^\mu}\,dxdy,\text{\; for all $u,v\in W_0^{s,p}(\Om)$}.\] 
	In view of Hardy-Littlewood-Sobolev inequality and \eqref{ft}, $T$ is
	well posed. 
	We will show that $T$ is strongly continuous (see \cite[Definition 2.95 $(iv)$]{CLM}). Indeed, let $\{u_n\}$ be a sequence in $W_0^{s,p}(\Om)$ such that
	\ $u_n \rightharpoonup u_*$ weakly  in $W_0^{s,p}(\Om)$ as $n\to \infty$. Now using compact embedding $W_0^{s,p}(\Om)\hookrightarrow \hookrightarrow L^q(\Omega), \;\;1<q<p_s^*,$
	passing to a subsequence, still denoted by $\{u_n\},$ we have $u_n \rightarrow u_*$ strongly in $L^q(\Omega)$, $u_n(x) \rightarrow u_*(x)$ as $n\to\infty$ and hence,
	there exists some $\tilde h\in L^{\frac{2N}{2N-\mu}}(\Om)$ with $|u_n(x)|\leq \tilde h(x)$ for a.e. $x\in \Om.$ Moreover, for a.e. $x\in\Om,$ we have $\widehat{f}(x,u_n(x))\to\widehat{f}(x,u_*(x))$ and $\widehat{F}(x,u_n(x))\to\widehat{F}(x,u_*(x))$ as $n\to\infty$ and $\{F(\cdot,u_n)\}$ is bounded in $L^{\frac{2N}{2N-\mu}}(\Om)$ and thus, $F(\cdot,u_n)\rightharpoonup F(\cdot,u_*)$ weakly in $L^{\frac{2N}{2N-\mu}}(\Om)$  as $n\to\infty.$ For $v\in W_0^{s,p}(\Om),$ consider the linear continuous  map $\Sigma: L^{\frac{2N}{2N-\mu}}(\Om)\to\RR,$ defined as
	\[\Sigma(w)=\int_\Om\int_\Om\frac{w(y)\widehat f(x,u_*)v(x)}{|x-y|^{\mu}}dxdy.\] Then by letting $n\to\infty,$ we get
	\begin{align}\label{int2}
	\int_\Om\int_\Om\frac{\widehat F(y,u_n)\widehat f(x,u_*)v(x)}{|x-y|^{\mu}}dxdy\to \int_\Om\int_\Om\frac{\widehat F(y,u_*)\widehat f(x,u_*)v(x)}{|x-y|^{\mu}}dxdy.
	\end{align} Therefore, using Hardy-Littlewood-Sobolev inequality, \eqref{ft}, \eqref{int2} and  Lebesgue dominated convergence theorem, we obtain
	\begin{align}
	&|\langle T(u_n)-T(u_*),v\rangle|\nonumber\\&\leq \left|\int_{\Om}\int_{\Om}\frac{\widehat F(y,u_n)\left[(\widehat f(x,u_n)-\widehat f(x,u_*))v\right]}{|x-y|^{\mu}}dxdy\right|
	+\left|\int_{\Om}\int_{\Om}\frac{\left[\widehat F(y,u_n)-\widehat F(y,u_*)\right]\widehat f(x,u_*)v}{|x-y|^{\mu}}dxdy\right|\nonumber\\
	&\leq \|\widehat F(\cdot,u_n)\|_{L^{\frac{2N}{2N-\mu}}(\Om)}\|(\widehat f(\cdot,u_n)-\widehat f(\cdot,u_*))v\|_{L^{\frac{2N}{2N-\mu}}(\Om)}
	+\left|\int_{\Om}\int_{\Om}\frac{\left[\widehat F(y,u_n)-\widehat F(y,u_*)\right]\widehat f(x,u_*)v}{|x-y|^{\mu}}dxdy\right|\nonumber\\
	&\qquad\to 0 \text{\;\; as } n\to \infty.
	\end{align}
	That is,
	$T(u_n) \rightarrow T(u_*)$ in $W^{-s,p'}(\Om)$ as $n\to\infty$ and thus,  $T$ is strongly continuous. Hence, \cite[Lemma 2.98  $(ii)$]{CLM} yields that $T$ is pseudomonotone.\\
	Next, using Lemma \ref{mon} and arguing as in \cite[Lemma 3.2]{ss} we get that $\fpl:W_0^{s,p}(\Om)\to W^{-s,p'}(\Om)$ is pseudomonotone.
	Therefore, $\fpl+T$ is a pseudomonotone operator.
	\vskip2pt
	\noindent
	On the other hand, using Hardy-Littlewood-Sobolev inequality, H\"older inequality, \eqref{ft} and the continuous embedding $W_0^{s,p}(\Om)\hookrightarrow L^q(\Omega), \;\; 1<q<p_s^*$, we deduce
	\begin{align*}
	\|T(u)\|_{-s,p'} &= \sup_{\|v\|_{s,p} \le 1}\left| \int_{\Om}\int_{\Omega} \frac{\widehat{F}(y,u)\widehat{f}(x,u)v(x)} {|x-y|^\mu}\,dxdy \right|\\
	&\leq C_3\|\widehat{f}(\cdot,u)v\|_{L^{\frac{2N}{2N-\mu}}(\Om)} \|\widehat{F}(\cdot,u)\|_{L^{\frac{2N}{2N-\mu}}(\Om)} \\
	&\leq C_4 \left(\|v\|_{L^{\frac{2N}{2N-\mu}}(\Om)} + \|\underline{v}\|_{L^{r\frac{2N}{2N-\mu}}(\Om)}^{r-1} \|v\|_{L^{r\frac{2N}{2N-\mu}}(\Om)} + \|\overline{v}\|_{L^{r\frac{2N}{2N-\mu}}(\Om)}^{r-1}\|v\|_{L^{r\frac{2N}{2N-\mu}}(\Om)}\right)^2\\
	&\leq C_5 \|v\|_{s,p}^2\left(1 + \|\underline{v}\|_{L^{r\frac{2N}{2N-\mu}}(\Om)}^{r-1} + \|\overline{v}\|_{L^{r\frac{2N}{2N-\mu}}(\Om)}^{r-1}\right)^2\\
	&\leq C_5 \left(1 + \|\underline{v}\|_{L^{r\frac{2N}{2N-\mu}}(\Om)}^{r-1} + \|\overline{v}\|_{L^{r\frac{2N}{2N-\mu}}(\Om)}^{r-1}\right)^2,
	\end{align*} where $C_3,C_4,C_5$ are some positive constants which do not depend on $u,v$. This implies that $T$ is bounded. Again arguing as in \cite[Lemma 3.2]{ss}, it follows that $\fpl+T$ is bounded.
	\vskip2pt
	\noindent
	Finally we show that $\fpl+T$ is coercive. Indeed, again using Hardy-Littlewood-Sobolev inequality, H\"older inequality, \eqref{ft} and 
	the continuous embedding $W_0^{s,p}(\Om)\hookrightarrow L^q(\Omega), \;\; 1<q<p_s^*$, for all $u \in W_0^{s,p}(\Om) \setminus \{0\},$ we have
	\begin{align*}
	\frac{\langle \fpl(u) + T(u),u\rangle} {\|u\|_{s,p}} &\;\;=  \|u\|_{s,p}^{p-1} - \frac{1}{\|u\|_{s,p}} \int_{\Om}\int_{\Omega} \frac{\widehat{F}(y,u)\widehat{f}(x,u)u(x)} {|x-y|^\mu}\,dxdy \\
	&\;\; \ge \|u\|_{s,p}^{p-1} - \frac{C_6}{\|u\|_{s,p}} \Big(\|u\|_{L^{\frac{2N}{2N-\mu}}(\Om)} + \|\underline{v}\|_{L^{r\frac{2N}{2N-\mu}}(\Om)}^{\frac{\mu}{2N-\mu}} \|u\|_{L^{r\frac{2N}{2N-\mu}}(\Om)}\nonumber\\&\qquad\qquad\qquad\qquad\qquad\qquad+ \|\overline{v}\|_{{L^{r\frac{2N}{2N-\mu}}(\Om)}}^{\frac{\mu}{2N-\mu}} \|u\|_{L^{r\frac{2N}{2N-\mu}}(\Om)} \Big)^2\\
	&\;\; \ge \|u\|_{s,p}^{p-1} - C_7\|u\|_{s,p}\to\infty \text{\qquad as\; } \|u\|_{s,p}\to \infty,
	\end{align*}
	where  $C_6, C_7>0$ are some constants, independent of $u$.  Applying \cite[Theorem 2.99]{CLM}, we get that there exists a solution, say $v_0\in W_{0}^{s,p}(\Om)$ 
	to the following equation:
	\begin{align}\label{eq}
	\fpl u+T(u)=0 \text{ in }W^{-s,p'}(\Om),
	\end{align} that is, $$\widehat{J}(v_0)=\min_{u\in W_0^{s,p}(\Om)}\widehat{J}(u),$$ where  $\widehat{J}\in C^1(W_0^{s,p}(\Om),\RR)$ is the
	energy functional associated to  \eqref{eq} and is defined as $$\widehat{J}(u)=\frac{\|u\|_{s,p}^p}{p}-\int_{\Om}\int_{\Om}\frac{\widehat{F}(x,u)\widehat{F}(y,u)}{|x-y|^\mu}dxdy.$$ 
	Now we claim that
	\begin{align}\label{int1}
	\underline{v} \le v_0 \le \overline{v} \text{\;\;\;\;in\; $\Omega$}.
	\end{align}
	Observe that, \eqref{int1} holds in $\RR^N\setminus\Omega$. Using $(v_0-\overline{v})^+ \in W_0^{s,p}(\Om)$ as a test function in the weak formulation of \eqref{eq} and using the fact that $\overline{v}$ is a supersolution of \eqref{mainprob}, we deduce
	\begin{align*}
	\langle \fpl v_0, (v_0-\overline{v})^+\rangle &= \int_{\Omega}\int_{\Omega}\frac{\widehat{F}(y,v_0)}{|x-y|^\mu} \widehat{f}(x,v_0) (v_0-\overline{v})^+(x) \,dxdy \\
	&= \int_{\Omega} \int_{\Omega}\frac{{F}(y,\overline{v})}{|x-y|^\mu} f(x, \overline{v}) (v_0-\overline{v})^+(x) \,dx dy\\
	&\le \langle \fpl \overline{v}, (v_0-\overline{v})^+\rangle.
	\end{align*}
	Therefore, we get 
	\begin{align}\langle\label{ss} \fpl v_0 - \fpl\overline{v}, (v_0-\overline{v})^+\rangle \le 0.\end{align}
	From \cite[Lemma A.2]{second-eigenvalue} and \cite[Lemma 2.3] {SH} (with $g(t)=t^+$),
	we recall the following standard inequalities: for all $a,b \in \RR,$
	\[|a^+-b^+|^p \le (a-b)^{p-1} (a^+ - b^+), \quad (a-b)^{p-1} \le C_p (a^{p-1}-b^{p-1}).\]
	Using the last two inequalities along with \eqref{ss}, we obtain
	\begin{align*}
	&\|(v_0-\overline{v})^+\|_{s,p}^p = \int_{\RR^N}\int_{\RR^N} \frac{|(v_0(x)-\overline{v}(x))^+ - (v_0(y)-\overline{v}(y))^+|^p}{|x-y|^{N+sp}}\,dxdy\\
	& \leq \int_{\RR^N}\int_{\RR^N}\frac{ [(v_0-\overline{v})(x) - (v_0-\overline{v})(y)]^{p-1}[(v_0-\overline{v})^+(x) - (v_0-\overline{v})^+(y)]}{|x-y|^{N+sp}}\,dxdy\\
	& \leq C_p\bigg[ \int_{\RR^N}\int_{\RR^N} \frac{(v_0(x) - v_0(y))^{p-1}  [(v_0-\overline{v})^+(x) - (v_0-\overline{v})^+(y)]}{|x-y|^{N+sp}}dxdy\\&\quad\qquad-  \int_{\RR^N}\int_{\RR^N} \frac{(\overline{v}(x)-\overline{v}(y))^{p-1} [(v_0-\overline{v})^+(x) - (v_0-\overline{v})^+(y)]}{|x-y|^{N+sp}}dxdy\bigg]\\
	&=C_p \langle \fpl v_0 - \fpl \overline{v}, (v_0-\overline{v})^+\rangle\le 0,
	\end{align*}
	and thus, we have $(v_0-\overline{v})^+ = 0$ which infers that $v_0 \le \overline{v}$ in $\Omega$. Similarly, testing \eqref{eq} with $(v_0-\underline{v})^- \in W_0^{s,p}(\Om),$
	we can prove that $v_0 \ge \underline{v}$ and hence, \eqref{int1} holds true. \\
	Next, we show that $v_0$ is a local minimizer of $J$ in $W_0^{s,p}(\Om)$.
	By exploiting the monotonicity  and the definition of $\widehat{f}$ along with \eqref{int1},
	we obtain, in the weak sense, \begin{align*}
	(-\Delta)_p^s(\overline{v}-v_0)& \geq  \left( \int_{\Om }\frac{F(y,\overline{v})}{|x-y|^{\mu}}~dy\right) f(x,\overline{v})- \left( \int_{\Om }\frac{\widehat{F}(y,v_0)}{|x-y|^{\mu}}~dy\right) \widehat{f}(x,v_0)\\
	& \geq 0 
	\end{align*} in $\Om$ and by definition, $\overline{v}-v_0\geq 0$ in $\RR^N \setminus \Om.$
	In view of the fact that $\overline{v}$ is not a solution to \eqref{mainprob}, we have  $v_0 \not = \overline{v}.$ 
	Therefore, by the strong maximum principle for fractional $p$-Laplacian (Lemma \ref{max}),   it follows that $\overline{v}-v_0>0$ in $\Om$ and similarly  $v_0-\underline{v}>0$ in $\Om.$ Thus, it follows that $v_0 \in W_0^{s,p}(\Om)$ is a weak solution to  \eqref{mainprob}.
	Now, again using Lemma \ref{max} and Hopf's lemma for fractional $p$-Laplacian (see \cite[Theorem 1.5]{DQ}), 
	we can have   $\overline{v}-v_0 \geq Rd^s$ in $\Om,$ for some $R>0$, where $d$ is the distance function, defined in \eqref{dist}. Likewise, $v_0 -\underline{v}\geq Rd^s$ in $\Om,$ for some $R>0$. 
	Also, from Lemma \ref{L-infty} and Proposition \ref{regularity}, we get $ v_0 \in C^{0,\al}_d(\overline{\Om})$. Let us denote $$\bar{B}^d_{R/2}(v_0):=\{u\in W_0^{s,p}(\Om): \|u-v_0\|_{C^{0}_d(\ol\Om)}\;\leq R/2\}.$$ For each $w \in \bar{B}^d_{R/2}(v_0)$, we have  
	\begin{align*}
	\frac{\overline{v}-w}{d^s} = \frac{\overline{v} - v_0}{d^s}+ \frac{v_0 -w}{d^s} \geq R- \frac{R}{2}=\frac R2 \text{\;\;\; in\;} \overline{\Om}. 
	\end{align*}
	The last relation implies that $\overline{v}-w>0$ in $\Om$. On a similar note, we have $w -\underline{v}>0 $  in $\Om$. Therefore, in $W_0^{s,p}(\Om)\cap\bar{B}^d_{R/2}(v_0),$  $\widehat{J}$ agrees with $J$ and thus,  $v_0$ emerges as a local minimizer of $J$ in $W_0^{s,p}(\Om)\cap \bar{B}^d_{R/2}(v_0).$ Finally, from Theorem \ref{thm2}, we infers that $v_0$ is a local minimizer of $J$ in $W_0^{s,p}(\Om)$ as well  and hence, the theorem is proved. 
\end{proof}
%			\section{Acknowledgement}
%			\noi Reshmi Biswas is supported by research
%			grant from IIT Guwahati and Sweta Tiwari is supported by MATRICS grant MTR/2018/000010 funded by 
%			SERB, India. 
{
	\noi\begin{proof}[{\bf Proof of Theorem \ref{pert} and \ref{thmch4.1}}]
		The proofs of Theorem \ref{pert} and Theorem \ref{thmch4.1} follow using the similar arguments as
		in the proofs of Theorems \ref{reg}-\ref{thmch4},  by incorporating appropriate modifications in the calculations using 
		Sobolev embedding $W^{s,p}_0(\Omega) \hookrightarrow L^{q}(\Om)$, $1<q\leq p_s^*,$ for the additional perturbation term $g(x,u)$ and defining the required   truncation for $g$ in a similar fashion as defined for $f$ in the previous proofs. \end{proof}}
	As discussed in \cite{divya}, Theorem \ref{thmch4.1} finds application in proving the multiplicity of the solutions of the problem \eqref{mainprob1} with concave type perturbation $g(x,u)$. 
	In particular consider the following problem:
	\begin{equation}\label{ch33}
	\left.\begin{array}{rllll}
	(-\Delta)_p^s u
	&=\la \left( |u|^{q-2}u + \left(\DD \int_{\Om}\frac{F(y,u)}{|x-y|^{\mu}}dy\right) f(x,u)\right) ,\; u>0 \; \text{in}\;
	\Om,\\
	u&=0 \; \text{ in } \RR^N \setminus \Om,
	\end{array}
	\right\}
	\end{equation}
	where $\la>0,\; 1<q<p$ and $f$  is a non decreasing function and satisfies $(\bf{H})$ with $1<r<p_{s,\mu}^*$. 
	One can check that a solution $\underline{w}$  to 
	\begin{equation*}
	(-\Delta)_p^s u
	=\la |u|^{q-2}u , \; u>0  \; \text{in}\;
	\Om,
	u=0 \; \text{ in } \RR^N \setminus \Om,
	\end{equation*}
	is  subsolution to \eqref{ch33} for all $\la>0$ whereas for $\la>0$ small enough, a solution $\overline{w}$ to 
	\begin{equation*}
	(-\Delta)_p^s u
	=1 , \; u>0  \; \text{in}\;
	\Om,
	u=0 \; \text{ in } \RR^N \setminus \Om.
	\end{equation*}
	is a supersolution to \eqref{ch33} such that $\underline w\leq \overline w.$
	Now using Theorem \ref{thmch4.1}, we infer that there exists a solution, say $w_0$, to \eqref{ch33}, which is a local minimizer in $W_0^{s,p}(\Om)$.
	Then the existence of the second solution is guaranteed
	by showing that the energy functional associated to \eqref{ch33} satisfies the moutain pass geometry and Palaise-smale condition. 
$\hfill \square$                                                                           

% 	{\color{blue}\subsection{An application of regularity results}
% Consider the following problem:
% \begin{equation}\label{ch33}
% \left.\begin{array}{rllll}
% (-\Delta)_p^s u
% &=\la \left( |u|^{q-2}u + \left(\DD \int_{\Om}\frac{F(y,u)}{|x-y|^{\mu}}dy\right) f(x,u)\right) ,\; u>0 \; \text{in}\;
% \Om,\\
% u&=0 \; \text{ in } \RR^N \setminus \Om,
% \end{array}
% \right\}
% \end{equation}
% where $\la>0,\; 1<q<p$ and $f$  is a non decreasing function and  satisfies $(\bf{H})$ with $1<r<p_{s,\mu}^*$. Let $\underline{w}$ be a solution to 
% \begin{equation*}
% (-\Delta)_p^s u
% =\la |u|^{q-2}u , \; u>0  \; \text{in}\;
% \Om,
% u=0 \; \text{ in } \RR^N \setminus \Om,
% \end{equation*}
% and let $\overline{w}$ be a solution to 
% \begin{equation*}
% (-\Delta)_p^s u
% =1 , \; u>0  \; \text{in}\;
% \Om,
% u=0 \; \text{ in } \RR^N \setminus \Om.
% \end{equation*}
% Then  for all $\la>0,\; \underline{w}$ is a subsolution to \eqref{ch33}. And for $\la$ small enough, $\overline{w}$ is a supersolution to \eqref{ch33}. Now using Theorem \ref{thmch4.1}, we infer that there exists a solution, say $w_*$, to \eqref{ch33}, which is a local minimizer in $W_0^{s,p}(\Om)$.
% Next, by showing that the energy functional associated to \eqref{ch33} satisfies  moutain pass geometry and Palaise-smale condition, the existence of the second solution is guaranteed, which if different from $w_*$.                                                                               }

\end{document}